\documentclass[a4paper, 10pt, notitlepage]{article}

\usepackage{amsthm, amsmath, amssymb, latexsym}
\usepackage{mathrsfs,cite}
\usepackage[multiple]{footmisc}
\usepackage{graphicx}
\usepackage[ruled,vlined]{algorithm2e}

\theoremstyle{plain}
\newtheorem{theorem}{Theorem}
\newtheorem{lemma}{Lemma}

\newtheorem{proposition}{Proposition}



\usepackage[a4paper]{geometry}		
 
\geometry{left=3.5cm}
\geometry{right=3.5cm}

\theoremstyle{definition}

\theoremstyle{remark}
\newtheorem{remark}{Remark}

\DeclareMathOperator{\co}{co}
\DeclareMathOperator{\bd}{bd}
\DeclareMathOperator*{\argmin}{arg\,min}
\DeclareMathOperator{\proj}{Pr}
\DeclareMathOperator{\determ}{det}

\author{M.V. Dolgopolik\footnote{Institute for Problems in Mechanical Engineering, Russian Academy of Sciences,
Saint Petersburg, Russia}
\footnote{This work was performed in IPME RAS and supported by the Russian Science Foundation (Grant No. 20-71-10032).}}
\title{The alternating direction method of multipliers for finding the distance between ellipsoids}

\begin{document}

\maketitle

\begin{abstract}
We study several versions of the alternating direction method of multipliers (ADMM) for solving the convex problem of
finding the distance between two ellipsoids and the nonconvex problem of finding the distance between the boundaries of
two ellipsoids. In the convex case we present the ADMM with and without automatic penalty updates and demonstrate via
numerical experiments on problems of various dimensions that our methods significantly outperform all other existing
methods for finding the distance between ellipsoids. In the nonconvex case we propose a heuristic rule for updating the
penalty parameter and a heuristic restarting procedure (a heuristic choice of a new starting for point for the second
run of the algorithm). The restarting procedure was verified numerically with the use of a global method based on KKT
optimality conditions. The results of numerical experiments on various test problems showed that this procedure always
allows one to find a globally optimal solution in the nonconvex case. Furthermore, the numerical experiments also
demonstrated that our version of the ADMM significantly outperforms existing methods for finding the distance between
the boundaries of ellipsoids on problems of moderate and high dimensions.
\end{abstract}

\section{Introduction}

The alternating direction method of multipliers (ADMM) is an efficient method for solving structured convex optimisation
problems \cite{EcksteinYao,YangHan2016,HanSunZhang2018,HeYangWang,GhadimiTeixeiraShames,BotCsetnek} that has found a
wide variety of applications \cite{BoydParikhChuEtAl,JiaCaiHan,FangHeLiuYuan2015,LingRibeiro2014}, including
applications to some nonconvex and nonsmooth optimisation problems \cite{YangPongChen2017,HajinezhadShi}. Although the
ADMM was originally developed for structured convex problems, extensions of this method to various nonconvex settings
have recently become an active area of research
\cite{HongLuoRazaviyayn,BotNguyen,GuoHanWu2017,MagnussonRabbat2016,ZhangLuo2020,PengChenZhu2015,WangYinZeng,
ThemelisPatrinos}.

Recently, a version of the ADMM for finding the Euclidean projection of a point onto an ellipsoid has been developed
\cite{JiaCaiHan}. Numerical experiments presented in \cite{JiaCaiHan} showed that the ADMM is significantly faster
than other existing methods for projecting a point onto an ellipsoid.

Being inspired by paper \cite{JiaCaiHan}, we propose to apply the ADMM to the problem of finding the distance between
two ellipsoids or the boundaries of two ellipsoids, which have attracted a considerable attention of researchers. A
detailed algebraic analysis of these and related problems was presented in
\cite{UteshevGoncharova2018,UteshevYashina2015,UteshevYashina2008}. An exact penalty method for finding the distance
between the boundaries of two ellipsoids was developed in \cite{TamasyanChumakov}. A geometric method for finding the
distance between two ellipsoids was studied in \cite{LinHan}, while the so-called charged balls method for solving this
problem was proposed in \cite{Abbasov}. A global optimisation method for solving a closely related (but more difficult)
nonconvex problem of finding the so-called \textit{signed} distance between two ellipsoids was developed in 
\cite{IwataNakatsukasaTakeda}.

The main goal of this paper is to develop the ADMM for solving both the convex problem of finding the distance between
two ellipsoids and the nonconvex problem of finding the distance between the boundaries of two ellipsoids. In the convex
case we present the standard version of the ADMM with fixed penalty parameter and, following the ideas of
\cite{HeYangWang}, a version of the ADMM with automatic adjustments of the penalty parameter. To verify the efficiency
of the proposed method, we present some results of numerical experiments on problems of various dimensions. These
results demonstrate that the ADMM with penalty adjustments is faster than the ADMM with fixed penalty parameter, and
both versions of the ADMM significantly outperform all other existing methods for finding the distance between
ellipsoids.

In the nonconvex case, we present a version of the ADMM with a heuristic rule for updating the penalty parameter. We
provide a theoretical analysis of this method with a different rule for updating the penalty parameter. This rule
performed poorly in numerical experiments; nevertheless, its analysis sheds some light on the overall performance of the
ADMM and helps one to gain an insight into the choice of parameters of the method and their role in the optimisation
process.

Since the problem of finding the distance between the boundaries of two ellipsoids is essentially nonconvex (i.e. there
is always a locally optimal solution of this problem, which is not globally optimal), we also propose a heuristic
restarting procedure for our method. Namely, we propose a heuristic choice of a new starting point for the second run
of the algorithm that is defined by a point computed during the first run. To verify whether this strategy allows one
to find a globally optimal solution, we describe a slight modification of the method from
\cite{IwataNakatsukasaTakeda} that provably computes a globally optimal solution of the problem of finding the distance
between the boundaries of two ellipsoids. Then we present and discuss some results of numerical experiments on problems
of various dimensions. These results (as well as results of numerous other experiments on various test problems not
presented in this paper) demonstrate that the proposed heuristic restarting procedure always allows one to find a 
globally optimal solution. Furthermore, we applied our restarting procedure to the exact penalty method from
\cite{TamasyanChumakov}, and the results of numerical experiments showed that this method with the restarting procedure
always finds a globally optimal solution as well, but without this procedure the method often converges to a locally
optimal solution. Finally, the numerical experiments show that the ADMM significantly outperforms other methods for
finding the distance between the boundaries of ellipsoids on problems of moderate and high dimension, while on small
dimensional problems (namely, when the dimension is smaller than $8$), the modification of the global method from
\cite{IwataNakatsukasaTakeda} is the fastest method.

The paper is organised as follows. The convex problem of finding the distance between two ellipsoids is studied in 
Section~\ref{sect:ConvexCase}. In Subsection~\ref{subsect:ADMM_ConvexCase} we present two versions of the ADMM for
solving this problem, Subsection~\ref{subsect:ConvergenceConvexCase} contains a theoretical result on 
the convergence of these methods, while the results of numerical experiments are given in 
Subsection~\ref{subsect:NumericalExperiments_ConvexCase}. Section~\ref{sect:NonconvexCase} is devoted to the nonconvex
problem of finding the distance between the boundaries of two ellipsoids. The ADMM for solving this problem is
described in Subsection~\ref{subsect:ADMM_Nonconvex}. Its theoretical analysis is presented in
Subsection~\ref{subsect:Theoretical_Analysis}, while a heuristic restarting procedure for the method and a global method
for solving the problem are considered in Subsection~\ref{sect:HeuristicRestart}. Finally, some results of numerical
experiments in the nonconvex case are given in Subsection~\ref{subsect:NumericalExperiments_NonconvexCase}.

\section{The distance between ellipsoids: the convex case}
\label{sect:ConvexCase}

In the first part of the paper we study the problem of finding the distance between two ellipsoids $\mathcal{E}_1$ and
$\mathcal{E}_2$ in $\mathbb{R}^d$, defined as
\begin{equation} \label{eq:EllipsoidsDef}
  \mathcal{E}_i = \Big\{ x \in \mathbb{R}^d \Bigm| \big\langle x - z_i, Q_i (x - z_i) \big\rangle \le 1 \Big\}, 
  \quad i \in \{ 1, 2 \}.
\end{equation}
Here $\langle \cdot, \cdot \rangle$ is the inner product in $\mathbb{R}^d$, $z_1, z_2 \in \mathbb{R}^d$ are the
centres of the ellipsoids, and $Q_1, Q_2 \in \mathbb{R}^{d \times d}$ are positive definite symmetric matrices. The
problem of finding the distance between these ellipsoids is a convex programming problem that can be formalised in the
following way:
\begin{equation} \label{prob:EllipsoidDist_Convex}
\begin{split}
  &\min\: \frac{1}{2} \| x_1 - x_2 \|^2 \\
  &\text{subject to} \quad
  \big\langle x_1 - z_1, Q_1 (x_1 - z_1) \big\rangle \le 1, \quad 
  \big\langle x_2 - z_2, Q_2 (x_2 - z_2) \big\rangle \le 1.
\end{split}
\end{equation}
Here $\| \cdot \|$ is the Euclidean norm and $x_1, x_2 \in \mathbb{R}^d$. Problem \eqref{prob:EllipsoidDist_Convex}
is a convex quadratically constrained quadratic programming problem, which means that almost all general convex
programming methods, such as interior point methods, can be used to find its solution. However, it is more efficient to
utilise a method exploiting the structure of problem \eqref{prob:EllipsoidDist_Convex}. Our main goal in this section is
to describe and analyse one such method.

\begin{remark}
Let us note that the main results of the paper can be easily extended to the case when the ellipsoids are defined as
$$
  \mathcal{E}_i = \Big\{ x \in \mathbb{R}^d \Bigm| 
  \big\langle x , A_i x \big\rangle + \big\langle b_i, x \big\rangle + \alpha_i \le 0 \Big\},
  \quad i \in \{ 1, 2 \},
$$
where the matrices $A_1$ and $A_2$ are positive definite and $b_1, b_2 \in \mathbb{R}^d$. Namely, if one puts
$$
  z_i = - \frac{1}{2} A^{-1}_i b_i, \quad Q_i = \frac{1}{- 0.5 \langle b_i, z_i \rangle - \alpha_i} A_i, 
  \quad i \in \{ 1, 2 \},
$$
then equalities \eqref{eq:EllipsoidsDef} hold true and one can apply the method presented in this article to the
problem under consideration. Note that $- 0.5 \langle b_i, z_i \rangle - \alpha_i > 0$, provided the ellipsoid 
$\mathcal{E}_i$ is nondegenerate (i.e. its interior is nonempty), since $z_i$ is the point of global minimum of the
quadratic function from the definition of $\mathcal{E}_i$, while its optimal value is equal to 
$0.5 \langle b_i, z_i \rangle + \alpha_i < 0$.
\end{remark}

\subsection{The alternating direction method of multipliers}
\label{subsect:ADMM_ConvexCase}

Being inspired by ideas of Jia, Cai, and Han \cite{JiaCaiHan} on algorithms for projecting a point onto an ellipsoid,
we propose to solve the problem of finding the distance between two ellipsoids with the use of the alternating direction
method of multipliers (ADMM). Recall that this method was originally developed to solve convex optimisation problems of
the form:
\begin{equation} \label{prob:SeparableConvexProblem}
  \min_{(x, y)} \: f(x) + g(y) \quad \text{subject to} \quad
  A x + B y = c, \quad x \in X, \quad y \in Y.
\end{equation}
Here $f \colon \mathbb{R}^d \to \mathbb{R}$ and $g \colon \mathbb{R}^m \to \mathbb{R}$ are convex functions, 
$A \in \mathbb{R}^{l \times d}$, $B \in \mathbb{R}^{l \times m}$, and $c \in \mathbb{R}^l$, while
$X \subseteq \mathbb{R}^d$ and $Y \subseteq \mathbb{R}^m$ are closed convex sets. A theoretical scheme of the ADMM for
solving problem \eqref{prob:SeparableConvexProblem} is as follows:
\begin{align*}
  x^{n + 1} &= \argmin_{x \in X} \left\{ f(x) - \langle \lambda^n, A x \rangle 
  + \frac{\tau}{2} \big\| A x + B y^n - c \big\|^2 \right\}, \\
  y^{n + 1} &= \argmin_{y \in Y} \left\{ g(y) - \langle \lambda^n, B y \rangle 
  + \frac{\tau}{2} \big\| A x^{n + 1} + B y - c \big\|^2 \right\}, \\
  \lambda^{n + 1} &= \lambda^n - \tau(A x^{n + 1} + B y^{n + 1} - c).
\end{align*}
Here $\lambda^n$ is an approximation of Lagrange multiplier for problem \eqref{prob:SeparableConvexProblem} and
$\tau > 0$ is the penalty parameter. It is well-known that, if well-defined, the ADMM converges to a globally optimal
solution of problem \eqref{prob:SeparableConvexProblem} for any value of the penalty
parameter $\tau$ under very mild assumptions (see \cite{BoydParikhChuEtAl,EcksteinYao,YangHan2016,HanSunZhang2018} for
more details). However, its numerical performance depends on the value of $\tau$. A poor choice of this parameter might
significantly slow down the convergence of the method. Therefore, below we will use a version of the ADMM with automatic
adjustments of the penalty parameter proposed in \cite{HeYangWang}, which will be described later.

In order to apply the ADMM to problem \eqref{prob:EllipsoidDist_Convex} let us us rewrite this problem as an
optimisation problem of the form \eqref{prob:SeparableConvexProblem}. Let $S_i$ be the square root of the matrix $Q_i$,
which exists since the matrices $Q_1$ and $Q_2$ are positive definite. Recall that $S_i$ is a positive definite matrix
and $S_i S_i = Q_i$. Consequently, one has
$$
  \mathcal{E}_i 
  = \Big\{ x_i \in \mathbb{R}^d \Bigm| \big\langle x_i - z_i, Q_i (x_i - z_i) \big\rangle \le 1 \Big\} 
  = \Big\{ x_i \in \mathbb{R}^d \Bigm| \| S_i (x_i - z_i) \|^2 \le 1 \Big\}.
$$
Denote $c_i = S_i z_i$ and $y_i = S_i x_i - c_i$, $i \in \{ 1, 2 \}$. Then problem
\eqref{prob:EllipsoidDist_Convex} can be rewritten as follows:
\begin{equation} \label{prob:EllipsoidsDist_ADMM}
  \min_{(x, y)} \: \frac{1}{2} \| x_1 - x_2 \|^2 \quad \text{s.t.} \quad
  S_i x_i - y_i = c_i, \quad \| y_i \| \le 1, \quad i \in \{ 1, 2 \}.
\end{equation}
Here $x = \left( \begin{smallmatrix} x_1 \\ x_2 \end{smallmatrix} \right) \in \mathbb{R}^{2d}$ and 
$y = \left( \begin{smallmatrix} y_1 \\ y_2 \end{smallmatrix} \right) \in \mathbb{R}^{2d}$. Note that problem
\eqref{prob:EllipsoidsDist_ADMM} is a particular case of problem \eqref{prob:SeparableConvexProblem} with
$f(x) = 0.5 \| x_1 - x_2 \|^2$, $g(y) \equiv 0$, and
\begin{gather*}
  A = \begin{pmatrix} S_1 & \mathbb{O}_d \\ \mathbb{O}_d & S_2 \end{pmatrix}, \quad
  B = \begin{pmatrix} -I_d & \mathbb{O}_d \\ \mathbb{O}_d & - I_d \end{pmatrix}, \quad
  c = \begin{pmatrix} c_1 \\ c_2 \end{pmatrix}, \\
  X = \mathbb{R}^{2d}, \quad
  Y = \left\{ y = \begin{pmatrix} y_1 \\ y_2 \end{pmatrix} \in \mathbb{R}^{2d} \biggm| 
  \| y_1 \| \le 1, \: \| y_2 \| \le 1 \right\},
\end{gather*}
where $\mathbb{O}_d$ is the zero matrix of order $d$ and $I_d$ is the identity matrix of order $d$. Therefore we can
apply the ADDM to find its solution. In the case of problem \eqref{prob:EllipsoidsDist_ADMM} this method takes the
following form:
\begin{align} \notag
  x^{n + 1} &= \argmin_{x_1, x_2 \in \mathbb{R}^d} \left\{ \frac{1}{2} \| x_1 - x_2 \|^2 
  + \sum_{i = 1}^2 \Big( - \langle \lambda_i^n, S_i x_i \rangle 
  + \frac{\tau}{2} \big\| S_i x_i - y_i^n - c_i \big\|^2 \Big) \right\}, 
  \\
  y_i^{n + 1} &= \argmin_{\| y_i \| \le 1} \Big( \langle \lambda_i^n, y_i \rangle
  + \frac{\tau}{2} \big\| S_i x_i^{n + 1} - y_i - c_i \big\|^2 \Big), \quad i \in \{1, 2 \} \label{prob:ADMM_y_def}
  \\
  \lambda_i^{n + 1} &= \lambda_i^n - \tau(S_i x_i^{n + 1} - y_i^{n + 1} - c_i), \quad i \in \{ 1, 2 \}.
  \notag
\end{align}
Observe that in order to find $x^{n + 1}$ one simply has to minimise the quadratic function
\begin{equation} \label{eq:QuadForm}
  f_n(x) = \frac{1}{2} \| x_1 - x_2 \|^2 + \sum_{i = 1}^2 \Big( - \langle \lambda_i^n, S_i x_i \rangle 
  + \frac{\tau}{2} \big\| S_i x_i - y_i^n - c_i \big\|^2 \Big).
\end{equation}
As is easily seen, the Hessian matrix of this function does not depend on $n$ and has the form
\begin{equation} \label{eq:QuadFormHessian}
  H(\tau) = \begin{pmatrix} I_d + \tau Q_1 & - I_d \\ - I_d & I_d + \tau Q_2 \end{pmatrix}.
\end{equation}
This matrix is positive definite, since
$$
  \langle x, H(\tau) x \rangle = \| x_1 - x_2 \|^2 + \tau \langle x_1, Q_1 x_1 \rangle + 
  \tau \langle x_2, Q_2 x_2 \rangle \quad \forall x \in \mathbb{R}^{2d}
$$
and the matrices $Q_i$ are positive definite. Therefore the point $x^{n + 1}$ is correctly defined. It can be
found, e.g. by the direct minimisation of the function $f_n$ via the conjugate gradient method or by solving the system
of linear equations $\nabla f_n(x^{n + 1}) = 0$ with a positive definite matrix, which has the form:
\begin{equation} \label{eq:LinearSystem_ConvexCase}
  \begin{pmatrix} I_d + \tau Q_1 & - I_d \\ - I_d & I_d + \tau Q_2 \end{pmatrix} x^{n + 1} =
  \begin{pmatrix} S_1 & \mathbb{O}_d \\ \mathbb{O}_d & S_2 \end{pmatrix} \Big( \lambda^n + \tau( y^n + c ) \Big).
\end{equation}
In our numerical experiments we computed $x^{n + 1}$ by directly solving this system with the use of the Cholesky
decomposition of the Hessian matrix $H(\tau)$. Below we describe an algorithm for finding the distance using this
approach, although it should be noted that other methods for computing $x^{n + 1}$ can be applied.

\begin{remark}
One can reduce the dimension of system \eqref{eq:LinearSystem_ConvexCase} by resolving it with respect to, say,
$x_2^{n + 1}$. Namely, from \eqref{eq:LinearSystem_ConvexCase} it follows that
$$
  x_2^{n + 1} = (I_d + \tau Q_1) x_1^{n + 1} - S_1 \big( \lambda_1^n + \tau (y_1^n + c_1) \big).
$$
Hence one gets the following system of linear equations for $x_1$:
\begin{equation} \label{eq:LinearSystem_Reduced}
\begin{split}
  \Big( \tau(Q_1 + Q_2) + \tau^2 Q_2 Q_1 \Big) x_1^{n + 1} &= S_2 \big( \lambda_2^n + \tau (y_2^n + c_2) \big)
  \\
  &+ (I_d + \tau Q_2) \Big[ S_1 \big( \lambda_1^n + \tau (y_1^n + c_1) \big) \Big].
\end{split}
\end{equation}
Note that the dimension of this systems is twice smaller than the dimension of system
\eqref{eq:LinearSystem_ConvexCase}. We performed numerical experiments on various test problems to verify whether it is
more efficient to solve the reduced system \eqref{eq:LinearSystem_Reduced} instead of system
\eqref{eq:LinearSystem_ConvexCase}. The results of these experiments demonstrated that the implementation of the ADMM
directly solving system \eqref{eq:LinearSystem_ConvexCase} outperformes the implementation based on the reduced
system \eqref{eq:LinearSystem_Reduced} on all test problems with both sparse and dense matrices $Q_1$ and $Q_2$.
However, this effect might be due to some peculiarities of our implementation of the ADMM in \textsc{Matlab}, and other
implementations may lead to different results.
\end{remark}

Observe that problem \eqref{prob:ADMM_y_def} for computing the next estimate of $y$ is equivalent to the following
optimisation problem:
$$
  \min_{\| y_i \| \le 1} 
  \frac{\tau}{2} \Big\| y_i - S_i x_i^{n + 1} + c_i + \frac{1}{\tau} \lambda_i^n \Big\|^2,
  \quad i \in \{ 1, 2 \}
$$
Thus, $y_i^{n + 1}$ is nothing but the projection of the point $S_i x_i^{n + 1} - c_i - (1 / \tau) \lambda_i^n$ onto the
unit ball centered at the origin, which can be easily computed analytically. 

To define a stopping criterion for the method, let us consider optimality conditions for problem
\eqref{prob:EllipsoidsDist_ADMM} and more general problem \eqref{prob:SeparableConvexProblem}. The KKT optimality
conditions for problem \eqref{prob:SeparableConvexProblem} have the form:
\begin{gather*}
  x^* \in X, \quad y^* \in Y, \quad A x^* + B y^* = c, 
  \\ 
  \langle \nabla f(x^*) - A^T \lambda^*, x - x^* \rangle \ge 0 \quad \forall x \in X, 
  \\
  \langle \nabla g(y^*) - B^T \lambda^*, y - y^* \rangle \ge 0 \quad \forall y \in Y.
\end{gather*}
In the case of problem \eqref{prob:EllipsoidsDist_ADMM} these conditions take the form:
\begin{gather} 
  \| y_i^* \| \le 1, \quad S_i x_i^* - y_i^* = c_i, \quad i \in \{ 1, 2 \}, 
  \label{eq:OptmCond_Feasibility}\\
  x_1^* - x_2^* - S_1 \lambda_1^* = 0, \quad
  x_2^* - x_1^* - S_2 \lambda_2^* = 0, 
  \\
  \langle \lambda_i^*, y_i - y_i^* \rangle \ge 0 \quad \forall y_i \colon \| y_i \| \le 1, \quad i \in \{ 1, 2 \}.
  \label{eq:OptmCond_y}
\end{gather}
Note that conditions \eqref{eq:OptmCond_y} are satisfied iff $y_i^* = \proj_{B^d}(y_i^* - \lambda_i^*)$, 
$i \in \{ 1, 2 \}$, where $\proj_{B^d}(\cdot)$ is the Euclidean projection onto the unit ball $B^d$ in $\mathbb{R}^d$
centered at the origin. Indeed, by definition $y_i^* = \proj_{B^d}(y_i^* - \lambda_i^*)$ iff $y_i^*$ is an optimal
solution of the convex problem
$$
  \min \: \frac{1}{2} \| y_i - (y_i^* - \lambda_i^*) \|^2
  \quad \text{subject to} \quad \| y_i \| \le 1.
$$
Bearing in mind the fact that the necessary and sufficient optimality conditions for this problem coincide with 
\eqref{eq:OptmCond_y} we arrive at the required result. Let us note that the equivalence between the optimality
condition of the form \eqref{eq:OptmCond_y} and the corresponding equality for the projection was pointed out, e.g. in
\cite{JiaCaiHan}.

The violation of optimality conditions \eqref{eq:OptmCond_Feasibility}--\eqref{eq:OptmCond_y} can be measured with the
use of the following functions:
\begin{gather*}
  R_x(x, y, \lambda) = \begin{pmatrix} x_1 - x_2 - S_1 \lambda_1 \\ x_2 - x_1 - S_2 \lambda_2 \end{pmatrix}, \quad
  R_y(x, y, \lambda) = 
  \begin{pmatrix} y_1 - \proj_{B^d}(y_1 - \lambda_1) \\ y_2 - \proj_{B^d}(y_2 - \lambda_2) \end{pmatrix},  \\
  R_c(x, y, \lambda) = \begin{pmatrix} S_1 x_1 - y_1 - c_1 \\ S_2 x_2 - y_2 - c_2 \end{pmatrix}.
\end{gather*}
The function $R_c(x, y, \lambda)$ measures the violation of the equality constraints of problem
\eqref{prob:EllipsoidsDist_ADMM}. The value 
$\| R_x(x, y, \lambda) \| + \| R_y(x, y, \lambda \| + \| R_c(x, y, \lambda) \|$ indicates how far a point
$(x, y)$ lies from an optimal solution of problem \eqref{prob:EllipsoidsDist_ADMM}, which implies that the condition
\begin{equation} \label{eq:StoppingCriterion}
  \| R_x(x^{n + 1}, y^{n + 1}, \lambda^{n + 1}) \| + \| R_y(x^{n + 1}, y^{n + 1}, \lambda^{n + 1}) \| + 
  \| R_c(x^{n + 1}, y^{n + 1}, \lambda^{n + 1}) \| < \varepsilon
\end{equation}
can be used as a stopping criterion. Let us note that different stopping criteria, such as the standard one based on
primal-dual residuals \cite[Section~3.3]{BoydParikhChuEtAl}, can be used as well. We opted for criterion
\eqref{eq:StoppingCriterion}, since similar stopping criteria for the ADMM performed well in a number of numerical
experiments presented in \cite{JiaCaiHan}. 

Thus, we arrive at the following scheme of the ADMM (given in Algorithm~\ref{alg:ADMM_convex}) for solving problem
\eqref{prob:EllipsoidsDist_ADMM}, which is equivalent to the original problem of finding the distance between 
ellipsoids. 

\begin{algorithm}[t]	\label{alg:ADMM_convex}
\caption{The ADMM for finding the distance between ellipsoids.}

\noindent\textbf{Step~1.} {Choose $y^0, \lambda^0 \in \mathbb{R}^{2d}$, $\tau > 0$, and $\varepsilon > 0$, and set 
$n := 0$.}

\noindent\textbf{Step~2.} {Compute the square roots $S_i$ of the matrices $Q_i$, vectors $c_i = S_i z_i$, 
$i \in \{1, 2 \}$, and compute the Cholesky decomposition $H(\tau) = L L^T$ of the matrix $H(\tau)$ given in
\eqref{eq:QuadFormHessian}.}

\noindent\textbf{Step~3.} {Compute 
$u^n = \left(\begin{smallmatrix} S_1(\lambda_1^n + \tau(y_1^n + c_1)) \\ S_2(\lambda_2^n + \tau(y_1^n + c_1))
\end{smallmatrix}\right)$ and solve the linear systems $L w^n = u^n$ and $L^T x^{n + 1} = w^n$.
}

\noindent\textbf{Step~4.} {Compute $v_i^n = S_i x_i^{n + 1} - c_i - (1 / \tau) \lambda_i^n$, $i \in \{ 1, 2 \}$. If 
$\| v_i^n \| \le 1$, then $y_i^{n + 1} = v_i^n$; otherwise, $y_i^{n + 1} = \frac{1}{\| v_i^n \|} v_i^n$, 
$i \in \{1, 2 \}$.
}

\noindent\textbf{Step~5.} {Put $\lambda_i^{n + 1} = \lambda_i^n - \tau(S_i x_i^{n + 1} - y_i^{n + 1} - c_i)$, 
$i \in \{ 1, 2 \}$.
}

\noindent\textbf{Step~6.} {Compute $R_x(x^{n + 1}, y^{n + 1}, \lambda^{n + 1})$, 
$R_y(x^{n + 1}, y^{n + 1}, \lambda^{n + 1})$, and $R_c(x^{n + 1}, y^{n + 1}, \lambda^{n + 1})$. 
If condition \eqref{eq:StoppingCriterion} holds true, then \textbf{Stop}. Otherwise, set $n := n + 1$ and go to
\textbf{Step 3}.
}
\end{algorithm}

Observe that in Algorithm~\ref{alg:ADMM_convex} the Cholesky decomposition $H(\tau) = L L^T$ is computed only once
before the main iterations of the method start. The use of this decomposition allows one to significantly reduce the
cost of computing the next iterate $x^{n + 1}$ in comparison with direct solution of the corresponding system of linear
equation. Furthermore, we chose the Cholesky decomposition for solving the corresponding linear systems because of its
well-known efficiency and numerical stability. However, the downside of this approach is the fact that one has to store
the lower triangular matrix $L$ of order $2d$ throughout iterations. Therefore, for large-scale problems a direct
minimization of the corresponding quadratic function might be more efficient due to reduced memory consumption.

Let us also describe a version of the ADMM for finding the distance between ellipsoids with automatic adjustments of
the penalty parameter $\tau$. The rule for modifying $\tau$ that we use was proposed in \cite{HeYangWang}. Numerical 
experiments on various problems \cite{HeYangWang,JiaCaiHan} clearly demonstrated that the ADMM using this rule
significantly outperforms the standard version of the ADMM with fixed $\tau$. Let us also note that the rule for
updating the penalty parameter from \cite{HeYangWang} is defined via an arbitrary sequence 
$\{ \alpha_n \} \subset [0, + \infty)$, and the convergence of the ADMM is established in \cite{HeYangWang} under the
assumption that $\sum_{n = 0}^{\infty} \alpha_n < + \infty$.

A version of the ADMM with automatic adjustments of the penalty parameter is given in
Algorithm~\ref{alg:ADMM_sa_convex}.

\begin{algorithm}[t]	\label{alg:ADMM_sa_convex}
\caption{The ADMM with penalty adjustments.}

\noindent\textbf{Step~1.} {Choose $y^0, \lambda^0 \in \mathbb{R}^{2d}$, $\tau_0 > 0$, $\eta \in (0, 1)$, a sequence 
$\{ \alpha_n \} \subset [0, + \infty)$, and $\varepsilon > 0$. Set $n := 0$ and \textbf{flag} ${}={}$ \textbf{true}.}

\noindent\textbf{Step~2.} {Compute the square roots $S_i$ of the matrices $Q_i$ and vectors $c_i = S_i z_i$, 
$i \in \{1, 2 \}$.}

\noindent\textbf{Step~3.} {If \textbf{flag} ${}={}$ \textbf{true}, compute the Cholesky decomposition 
$H(\tau_n) = L L^T$ of the matrix $H(\tau_n)$ given in \eqref{eq:QuadFormHessian} and put 
\textbf{flag} ${}={}$ \textbf{false}.}

\noindent\textbf{Step~4.} {Compute 
$u^n = \left(\begin{smallmatrix} S_1(\lambda_1^n + \tau_n(y_1^n + c_1)) \\ S_2(\lambda_2^n + \tau_n(y_1^n + c_1))
\end{smallmatrix}\right)$ and solve the linear systems $L w^n = u^n$ and $L^T x^{n + 1} = w^n$.
}

\noindent\textbf{Step~5.} {Compute $v_i^n = S_i x_i^{n + 1} - c_i - (1 / \tau_n) \lambda_i^n$, $i \in \{ 1, 2 \}$. If 
$\| v_i^n \| \le 1$, then $y_i^{n + 1} = v_i^n$; otherwise, $y_i^{n + 1} = \frac{1}{\| v_i^n \|} v_i^n$, 
$i \in \{1, 2 \}$.
}

\noindent\textbf{Step~6.} {Put $\lambda_i^{n + 1} = \lambda_i^n - \tau_n(S_i x_i^{n + 1} - y_i^{n + 1} - c_i)$, 
$i \in \{ 1, 2 \}$.
}

\noindent\textbf{Step~7.} {Compute $R_x(x^{n + 1}, y^{n + 1}, \lambda^{n + 1})$, 
$R_y(x^{n + 1}, y^{n + 1}, \lambda^{n + 1})$, and $R_c(x^{n + 1}, y^{n + 1}, \lambda^{n + 1})$. 
If condition \eqref{eq:StoppingCriterion} holds true, then \textbf{Stop}. 
}

\noindent\textbf{Step~8.} {Define
$$
  \tau_{n + 1} = \begin{cases}
    \tau_n (1 + \alpha_n), & \text{if } 
    \| R_x(x^{n + 1}, y^{n + 1}, \lambda^{n + 1}) \| < \eta \| R_c(x^{n + 1}, y^{n + 1}, \lambda^{n + 1}) \|, \\
    \tau_n / (1 + \alpha_n), & \text{if } 
    \eta \| R_x(x^{n + 1}, y^{n + 1}, \lambda^{n + 1}) \| > \| R_c(x^{n + 1}, y^{n + 1}, \lambda^{n + 1}) \|, \\
    \tau_n, & \text{otherwise.}
  \end{cases}
$$
If $\tau_{n + 1} \ne \tau_n$, set \textbf{flag} ${}={}$ \textbf{true}. Put $n := n + 1$ and go to \textbf{Step 3}.
}
\end{algorithm}

The only significant difference between Algorithms~\ref{alg:ADMM_convex} and \ref{alg:ADMM_sa_convex} is the additional
step (Step 8) describing adjustments of the penalty parameter $\tau_n$. However, note that in
Algorithm~\ref{alg:ADMM_sa_convex}, in contrast to Algorithm~\ref{alg:ADMM_convex}, one needs to recompute the Cholesky
decomposition of the Hessian matrix \eqref{eq:QuadFormHessian} every time the penalty parameter is adjusted. Results
of numerical experiments given in Section~\ref{subsect:NumericalExperiments_ConvexCase} demonstrate that (i) the
adjustments of the penalty parameter allow one to substantially reduce the number of iterations of the ADMM needed to
find an optimal solution with a prescribed tolerance, and (ii) the benefits of adjusting the penalty parameter
largely outweigh the extra time needed to recompute the Cholesky decomposition of the Hessian matrix multiple times. 

We used the boolean variable ``flag'' to indicate whether an update of the Cholesky decomposition is needed. It helps
one to avoid unnecessary computations by making the algorithm recompute the decomposition only when the penalty
parameter has been changed.

\begin{remark}
In our numerical experiments with ill-conditioned problems, stopping criterion \eqref{eq:StoppingCriterion}
was sometimes too optimistic in the sense that it led to the termination of the algorithm before the desired accuracy
of solution was achieved. To overcome this issue, in our implementations of Algorithms~\ref{alg:ADMM_convex} and
\ref{alg:ADMM_sa_convex} we additionally checked the conditions
\begin{equation} \label{eq:AdditionalStoppingCriterion}
  \big| \langle x_i^{n + 1} - z_i, Q_i(x_i^{n + 1} - z_i) \rangle - 1 \big| < \varepsilon,
  \quad i \in \{ 1, 2 \},
\end{equation}
if condition \eqref{eq:StoppingCriterion} was satisfied and $\| x_1^{n + 1} - x_2^{n + 1} \| > \delta$ for some small
$\delta > 0$. Let us note that inequalities \eqref{eq:AdditionalStoppingCriterion} simply mean that the points 
$x_i^{n + 1}$ lie close to the boundaries of the corresponding ellipsoids. Condition
\eqref{eq:AdditionalStoppingCriterion} is based on the fact that if the ellipsoids $\mathcal{E}_1$ and $\mathcal{E}_2$
do not intersect, then a unique solution of problem \eqref{prob:EllipsoidDist_Convex} necessarily lies on the boundaries
of these ellipsoids.

The additional stopping criterion \eqref{eq:AdditionalStoppingCriterion} led to noticeably improved results in most
cases when the stopping criterion \eqref{eq:StoppingCriterion} failed to recognise the non-optimality of the current
iterate. Alternatively, decreasing $\varepsilon$ by a factor of $100$ we obtained the same or even better improvements
in the accuracy of the found solution without any significant increase in the run time. 
\end{remark}

\subsection{Analysis of the methods}
\label{subsect:ConvergenceConvexCase}

Let us briefly discuss convergence of the two methods for finding the distance between ellipsoids described in the 
previous section. Note that it is sufficient to consider only Algorithm~\ref{alg:ADMM_sa_convex}, since this algorithm
is reduced to Algorithm~\ref{alg:ADMM_convex} when $\alpha_n \equiv 0$. 

\begin{theorem}
For any choice of parameters $y^0, \lambda^0 \in \mathbb{R}^{2d}$, $\tau_0 > 0$, $\eta \in (0, 1)$,
$\{ \alpha_n \} \subset [0, + \infty)$, $\sum_{n = 0}^{\infty} \alpha_n < + \infty$, and $\varepsilon > 0$
Algorithm~\ref{alg:ADMM_sa_convex} is well-defined and terminates after a finite number of iterations. Moreover, if
$\varepsilon = 0$, then Algorithm~\ref{alg:ADMM_sa_convex} generates a sequence $\{ (x^n, y^n, \lambda^n) \}$ that
converges to a point $(x^*, y^*, \lambda^*)$ satisfying optimality conditions
\eqref{eq:OptmCond_Feasibility}--\eqref{eq:OptmCond_y} and such that $x^*$ is a globally optimal solution of problem
\eqref{prob:EllipsoidDist_Convex}.
\end{theorem}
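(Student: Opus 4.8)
The plan is to reduce the statement to the general convergence theorem for the self-adaptive ADMM established in \cite{HeYangWang}, after verifying that problem \eqref{prob:EllipsoidsDist_ADMM} meets all of its hypotheses, and then to deduce finite termination from the continuity of the residual maps. First I would establish that the algorithm is well defined. At every iteration the matrix $H(\tau_n)$ is positive definite, as shown just after \eqref{eq:QuadFormHessian}, so its Cholesky factor $L$ exists and the two triangular systems in Step~4 determine $x^{n+1}$ uniquely. The vector $y^{n+1}$ in Step~5 is precisely $\proj_{B^d}$ of a well-defined point, as observed above, and Steps~6 and~8 are given by explicit formulas. Hence each iterate is uniquely and correctly defined for every $n$, irrespective of the parameter choices.

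Next I would check that \eqref{prob:EllipsoidsDist_ADMM} fits the framework of \eqref{prob:SeparableConvexProblem} in the form required by \cite{HeYangWang}. The functions $f(x)=\frac{1}{2}\|x_1-x_2\|^2$ and $g\equiv 0$ are convex, the sets $X=\mathbb{R}^{2d}$ and $Y$ are closed and convex, and both $A=\mathrm{diag}(S_1,S_2)$ and $B=-I_{2d}$ have full rank since $S_1,S_2$ are positive definite. A minimiser exists: because each $S_i$ is invertible, the equality constraints force $x_i=S_i^{-1}(y_i+c_i)$, so the feasible set is the continuous image of the compact set $Y$ and is therefore nonempty and compact, and $f$ attains its minimum on it. A KKT (saddle) point exists as well, since the equality constraints are affine and Slater's condition holds: any pair of interior points of the ellipsoids gives $\|y_i\|=\|S_i(x_i-z_i)\|<1$. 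Finally, the update rule in Step~8 is exactly the penalty-adjustment rule of \cite{HeYangWang} driven by the summable sequence $\{\alpha_n\}$.

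With these hypotheses in place and $\sum_{n}\alpha_n<+\infty$, I would invoke the convergence theorem of \cite{HeYangWang} to conclude that, when the stopping test is disabled (i.e. $\varepsilon=0$, so Step~7 never triggers), the sequence $\{(x^n,y^n,\lambda^n)\}$ converges to a limit $(x^*,y^*,\lambda^*)$ solving the variational inequality associated with \eqref{prob:EllipsoidsDist_ADMM}, that is, its KKT system. This system is precisely \eqref{eq:OptmCond_Feasibility}--\eqref{eq:OptmCond_y}, using the equivalence between \eqref{eq:OptmCond_y} and the projection identity recorded earlier. Since \eqref{prob:EllipsoidsDist_ADMM} is convex, the KKT conditions are sufficient for global optimality, so $x^*$ solves \eqref{prob:EllipsoidDist_Convex}.

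Finally I would treat finite termination for $\varepsilon>0$. The residual maps $R_x,R_y,R_c$ are continuous, since $\proj_{B^d}$ is nonexpansive and hence continuous; therefore convergence of the iterates yields $\|R_x\|+\|R_y\|+\|R_c\|\to 0$, the three residuals all vanishing at the limit. Consequently, were the algorithm to run indefinitely, the left-hand side of \eqref{eq:StoppingCriterion} would eventually fall below any $\varepsilon>0$, contradicting the supposition that Step~7 never halts; hence the method stops after finitely many iterations. I expect the main obstacle to be the bookkeeping needed to match the abstract statement of \cite{HeYangWang} — in particular its assumed saddle point and the exact form of its penalty-update sequence — to the concrete data of \eqref{prob:EllipsoidsDist_ADMM}, rather than any new estimate; once this correspondence is set up, convergence is inherited directly.
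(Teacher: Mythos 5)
Your proposal is correct and follows essentially the same route as the paper: verify well-definedness from the positive definiteness of $H(\tau_n)$, check that problem \eqref{prob:EllipsoidsDist_ADMM} satisfies the hypotheses of the self-adaptive ADMM convergence theorem of \cite{HeYangWang} (the paper phrases this via the variational inequality \eqref{prob:VariationalInequal}, with existence of a solution coming from compactness of the feasible set and the KKT characterisation coming from Slater's condition), invoke that theorem for the case $\varepsilon=0$, and deduce finite termination for $\varepsilon>0$ from convergence and the stopping criterion. Your explicit continuity argument for the residual maps in the last step merely spells out what the paper leaves implicit.
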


\begin{proof}
The fact that Algorithm~\ref{alg:ADMM_sa_convex} is well-defined follows directly from its detailed description given
in the previous subsection. Indeed, Step~2 of Algorithm~\ref{alg:ADMM_sa_convex} is well-defined, since by our
assumption the matrices $Q_i$, $i \in \{ 1, 2 \}$, are positive definite and any positive definite matrix has a unique
square root. Steps~3 and 4 are well-defined, since, as was shown above, the matrix $H(\tau)$ is positive definite for
any $\tau > 0$ and $\tau_n > 0$ for all $n \in \mathbb{N}$ according to Step~8. The correctness of all other steps of
the algorithm is obvious.

Let us prove that in the case $\varepsilon = 0$ Algorithm~\ref{alg:ADMM_sa_convex} generates a sequence 
$\{ (x^n, y^n, \lambda^n) \}$ that converges to a point $(x^*, y^*, \lambda^*)$ satisfying optimality conditions
\eqref{eq:OptmCond_Feasibility}--\eqref{eq:OptmCond_y}. Then the finite termination of this algorithm in the case
$\varepsilon > 0$ follows directly from the definition of stopping criterion \eqref{eq:StoppingCriterion}. Moreover,
from the fact that problem \eqref{prob:EllipsoidsDist_ADMM} is convex it follows that $(x^*, y^*)$ is a globally
optimal solution of this problem. Consequently, $x^*$ is a globally optimal solution of 
problem \eqref{prob:EllipsoidDist_Convex}, since by construction $(x^*, y^*)$ is a globally optimal solution of problem
\eqref{prob:EllipsoidsDist_ADMM} if and only if $x^*$ is a globally optimal solution of problem 
\eqref{prob:EllipsoidDist_Convex} and $S_i(x_i^* - z_i) = y_i^*$.

To prove the convergence of the sequence $\{ (x^n, y^n, \lambda^n) \}$ to a KKT point of problem
\eqref{prob:EllipsoidsDist_ADMM}, note that this problem is obviously equivalent to the problem of finding a solution
$(x^*, y^*)$ of the following variational inequality:
\begin{equation} \label{prob:VariationalInequal}
\begin{split}
  &\langle x - x^*, F(x^*) \rangle + \langle y - y^*, G(y^*) \rangle \ge 0 
  \\
  &\forall (x, y) \in \mathbb{R}^{2d} \times \mathbb{R}^{2d} \colon
  S_i x_i - y_i = c_i, \quad \| y_i \| \le 1, \quad i \in \{ 1, 2 \}, 
  \\
  &S_i x_i^* - y_i^* = c_i, \quad \| y_i^* \| \le 1, \quad i \in \{ 1, 2 \}.
\end{split}
\end{equation}
Here $F(x) = \left( \begin{smallmatrix} x_1 - x_2 \\ x_2 - x_1 \end{smallmatrix} \right)$ and $G(y) \equiv 0$. Note
that the mappings $F$ and $G$ are monotone as the gradients of convex functions.

As one can easily verify, Algorithm~\ref{alg:ADMM_sa_convex} (more precisely, algorithm described in
\eqref{prob:ADMM_y_def}) is a particular case of the ADMM from \cite{HeYangWang} applied to the variational inequality
\eqref{prob:VariationalInequal}. By \cite[Therorem~4.1 and Remark~4.2]{HeYangWang} the sequence generated by the ADMM
for solving problem \eqref{prob:VariationalInequal} converges to a solution of this problem for all values of
parameters, provided $\sum_{n = 0}^{\infty} \alpha_n < + \infty$, the algorithm is well-defined, and there exists a
solution of the variational inequality \eqref{prob:VariationalInequal}. 

The fact that the algorithm is well-defined was verified above. Note also that problem \eqref{prob:EllipsoidDist_Convex}
and the equivalent problem \eqref{prob:EllipsoidsDist_ADMM} have globally optimal solutions, since the feasible region
of problem \eqref{prob:EllipsoidDist_Convex} is obviously compact. In turn, any globally optimal solution of problem
\eqref{prob:EllipsoidsDist_ADMM} is a solution of the variational inequality \eqref{prob:VariationalInequal} due to the
convexity of problem \eqref{prob:EllipsoidsDist_ADMM}. Therefore one can conclude that a solution of the variational
inequality \eqref{prob:VariationalInequal} exists. 

Thus, by \cite[Therorem~4.1 and Remark~4.2]{HeYangWang} the sequence $\{ (x^n, y^n, \lambda^n) \}$ generated by
Algorithm~\ref{alg:ADMM_sa_convex} converges to a solution of the variational inequality \eqref{prob:VariationalInequal}
for all values of the parameters, provided  $\sum_{n = 0}^{\infty} \alpha_n < + \infty$. Any solution of
\eqref{prob:VariationalInequal} is a globally optimal solution of problem \eqref{prob:EllipsoidsDist_ADMM}.
Therefore the limit point $(x^*, y^*, \lambda^*)$ must satisfy optimality conditions
\eqref{eq:OptmCond_Feasibility}--\eqref{eq:OptmCond_y}, since Slater's condition obviously holds true for 
problem \eqref{prob:EllipsoidsDist_ADMM} (namely, put $x_i = z_i$ and $y_i = 0$).
\end{proof}

\subsection{Numerical experiments}
\label{subsect:NumericalExperiments_ConvexCase}

Without trying to present a thorough comparative analysis of Algorithms~\ref{alg:ADMM_convex} and
\ref{alg:ADMM_sa_convex} and other existing methods for finding the distance between two ellipsoids on various problem
instances (e.g. ellipsoids lying very far apart or very close to each other, `flat' and `elongated' ellipsoids, i.e. one
of the eigenvalues of the matrix $Q_i$ is much smaller/greater than others, etc.), let us give some results of
preliminary numerical experiments demonstrating the higher efficiency of the ADMM in comparison with other existing
methods for finding the distance between ellipsoids.

To generate the problem data for numerical experiments, first we randomly generated matrices $A_i$, $i \in \{ 1, 2 \}$,
of dimension $d \in \mathbb{N}$, whose elements were uniformly distributed in the interval $[-10, 10]$. If the matrix
$A_i$ has full rank, we define $Q_i = A_i^T A_i$. Otherwise, the matrix $A_i$ was randomly generated again till it had
full rank, to ensure that the matrix $Q_i$ is positive definite. The centres $z_i$ of the ellipsoids were also randomly
generated in such a way that their coordinates are uniformly distributed in the same interval $[-10, 10]$. 

The parameters of Algorithms~\ref{alg:ADMM_convex} and \ref{alg:ADMM_sa_convex} were chosen as follows:
$y^0 = \lambda^0 = 0$, $\varepsilon = 10^{-6}$, $\tau = \tau_0 = 1$. As in paper \cite{HeYangWang}, where the penalty
adapting strategy was proposed, we put $\eta = 0.1$ and $\alpha_n = 1$, if $n < 100$, while $\alpha_n = 0$, 
if $n \ge 100$ in Algorithm~\ref{alg:ADMM_sa_convex}. Below, Algorithm~\ref{alg:ADMM_convex} is denoted as ADMM, while
Algorithm~\ref{alg:ADMM_sa_convex} is denoted as sa-ADMM (self-adaptive ADMM; see \cite{HeYangWang}).

We compared Algorithms~\ref{alg:ADMM_convex} and \ref{alg:ADMM_sa_convex} with Lin and Han's method \cite{LinHan}, 
the exact penalty method from \cite{TamasyanChumakov}, and the so-called charged balls method \cite{Abbasov}. The
starting points in Lin and Han's (LH) algorithm were chosen as the centres $z_1$ and $z_2$ of the ellipsoids. The
parameters $\gamma_1$ and $\gamma_2$ were chosen as $\gamma_i = 1 / \| Q_i \|_1$, where $\| \cdot \|_1$ is the $1$-norm
(i.e. the maximum absolute column sum of the matrix), and the inequalities $\theta_1 < 10^{-6}$ and $\theta_2 < 10^{-6}$
were used as a stopping criterion. 

The penalty parameter $\lambda$ from the exact penalty method \cite{TamasyanChumakov} was defined as $\lambda = 100$. We
used the inequality $\| G^*(z_k) \| < \varepsilon = 10^{-5}$ as a stopping criterion, since in some cases the algorithm
failed to terminate when the value $\varepsilon = 10^{-6}$ was used. Since no rules for choosing the starting points
were given in \cite{TamasyanChumakov} and the exact penalty method cannot start from the centres of the ellipsoids, we
tried using two different initial guesses. The first rule for choosing starting points consisted in setting 
$x_1^0 = z_1$ and defining $x_2^0$ as the vector $z_2$ perturbed by some other vector with small coordinates, i.e.
$x_2^0 = z_2 + e$. We chose $e = (0.1, \ldots, 0.1)^T \in \mathbb{R}^d$. We denote the exact penalty method using this
rule as $EP_1$. The second rule consisted in defining $x_1^0$ and $x_2^0$ as the points at which the segment 
$\co\{ z_1, z_2 \}$ intersect the boundaries of the ellipsoids. We denote the exact penalty method with these starting
points as $EP_2$.

Finally, for the charged balls method \cite{Abbasov} (denoted CB) we used the same parameters as given in
\cite[Section~3]{Abbasov}. Namely, we set $p_1 = 10$, $p_2 = 1$, $\delta = 0.1$, and $\varepsilon = 10^{-8}$. The
starting points were chosen in the same way as in $EP_2$ method, since in our experiments the charged balls method
produced incorrect results, when the starting points were lying in the interiors of the ellipsoids. 

All algorithms were implemented in \textsc{Matlab}. We terminated the algorithms if the number of iterations exceeded
$10^7$ for the charged balls method and $10^6$ for all other methods. Since the behaviour of the algorithms appeared to
be very dependent on a particular problem instance (i.e. a particular algorithm might be very slow on one randomly
generated problem and very fast on another), we generated 10 problems for a given dimension 
$d \in \{ 10, 20, 30, 50, 100, 200, 300, 500, 1000, 2000 \}$ and run each algorithm on these 10 problems. The total
run time of each method rounded to the nearest tenth is presented in Figure~\ref{fig:ConvexCase} and
Table~\ref{tab:ConvexCase}. 

\begin{figure}[t]
\centering
\includegraphics[width=0.6\linewidth]{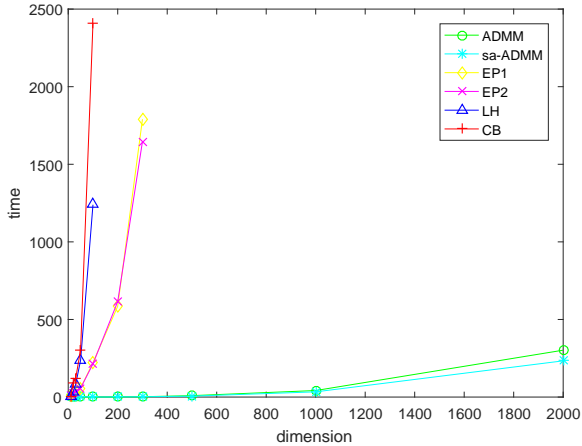}
\caption{The run time of each method for $d \in \{ 10, 20, 30, 50, 100, 200, 300, 500, 1000, 2000 \}$.}
\label{fig:ConvexCase}
\end{figure}

\begin{table} 
\begin{tabular}{c c c c c c c} 
  \hline
  d & ADMM & sa-ADMM & LH & $EP_1$ & $EP_2$ & CB \\
  \hline
  10 & 0.1 & 0.1 & 3.9 & 7.4 & 10.4 & 6.6 \\
  \hline
  20 & 0.1 & 0.1 & 33.2 & 26 & 27 & 92.1 \\
  \hline
  30 & 0.1 & 0.1 & 81 & 37.4 & 32.6 & 118.8 \\
  \hline
  50 & 0.1 & 0.1 & 236.6 & 57.5 & 57.5 & 303.6 \\
  \hline
  100 & 0.2 & 0.2 & 1239.3 & 222.1 & 213.8 & 2408.6 \\
  \hline
  200 & 0.9 & 0.7 & --- & 585 & 615.4 & --- \\
  \hline
  300 & 2.1 & 1.8 & --- & 1790.5 & 1646.6 & --- \\
  \hline
  500 & 9.2 & 5.9 & --- & --- & --- & --- \\
  \hline
  1000 & 42.1 & 33.6 & --- & --- & --- & --- \\
  \hline
  2000 & 302.1 & 233.4 & --- & --- & --- & --- \\
  \hline
\end{tabular}
\caption{The run time of each method in seconds. `---' indicates that either the time exceeded one hour or the
algorithm failed to find a solution due to reaching the prespecified maximal number of iterations.}
\label{tab:ConvexCase}
\end{table}

\begin{table} 
\begin{tabular}{c c c c c c c c c c c} 
  \hline
  d & 10  & 20 & 30 & 50 & 100 & 200 & 300 & 500 & 1000 & 2000 \\
  \hline
  ADMM & 45.3 & 153.3 & 113.2 & 154.5 & 152.3 & 244.2 & 328.4 & 433.4 & 425.8 & 761.5 \\
  \hline
  sa-ADMM & 46.6 & 128.4 & 113.2 & 120.1 & 108.2 & 213.7 & 273.9 & 263.4 & 321.1 & 557.6 \\
  \hline
\end{tabular}
\caption{The average number of iterations till termination for Algorithms~\ref{alg:ADMM_convex} and
\ref{alg:ADMM_sa_convex}.}
\label{tab:IterNumber}
\end{table}

The results of our numerical experiments clearly demonstrate that Algorithms~\ref{alg:ADMM_convex} and
\ref{alg:ADMM_sa_convex} considerably outperform all other existing methods for finding the distance between two
ellipsoids and are more suitable for large dimensional problems than other methods. 

Let us give some comments about the performance of other methods. Firstly, numerical experiments showed that on average
there is no significant difference between the exact penalty methods $EP_1$ and $EP_2$. Thus, it is not clear how to
initialise the exact penalty method \cite{TamasyanChumakov} to improve its performance.

Secondly, one should point out that the charged balls method, in accordance with the example given in \cite{Abbasov},
produced the most inaccurate results among all methods. In our numerical experiments the distance computed by the
charged balls method was at least $10^{-3}$ (in some examples even $10^{-2}$) greater than the distance computed by
other methods. We tried decreasing $\varepsilon$, but it did not improve the results. By defining the stepsize $\delta$
in the charged balls method as $\delta = 10^{-3}$ (or a smaller value) we managed to obtain the same accuracy as with
the use of other methods. However, such choice of the stepsize increased the run time by more than 10 times. That is why
we do not report the results of numerical experiments with different stepsizes $\delta$ here. Moreover, let us note
that, as was pointed out in \cite{Abbasov}, by changing the parameters $p_1$ and $p_2$ in the charged balls method one
could significantly improve its performance. We tried to find optimal values of these parameters; however, it turned out
that a nearly optimal choice of the parameters for one problem often became a very poor choice for another problem. That
is why we used the same values of the parameters $p_1$ and $p_2$ as in \cite{Abbasov}.

Finally, the results of numerical experiments showed that Algorithm~\ref{alg:ADMM_sa_convex} with adjustable penalty
parameter is faster than the ADMM with the fixed penalty parameter, despite the fact that every change of the penalty
parameter requires a recomputation of the Cholesky decomposition of the matrix $H(\tau_n)$ of dimension $2d$. 
The reason behind this lies in the fact that penalty adjustments allow one to substantially reduce the number of
iterations of the algorithm. To illustrate this point, in Table~\ref{tab:IterNumber} we present the average number of
iterations till termination for Algorithms~\ref{alg:ADMM_convex} and \ref{alg:ADMM_sa_convex}.

\section{The distance between ellipsoids: the nonconvex case}
\label{sect:NonconvexCase}

In the second part of the paper we study the problem of finding the distance between the boundaries of the two
ellipsoids $\mathcal{E}_1$ and $\mathcal{E}_2$ in $\mathbb{R}^d$, defined in \eqref{eq:EllipsoidsDef}. One can readily
check that the boundary of the ellipsoid $\mathcal{E}_i$ is defined by the corresponding equality constraint:
$$
  \bd \mathcal{E}_i = \Big\{ x \in \mathbb{R}^d \Bigm| \big\langle x - z_i, Q_i (x - z_i) \big\rangle = 1 \Big\}, 
  \quad i \in \{ 1, 2 \}.
$$
Therefore, the problem of finding the distance between the boundaries of the ellipsoids is a \textit{nonconvex}
programming problem that can formalised as follows:
\begin{equation} \label{prob:EllipsoidDist_NonConvex}
\begin{split}
  &\min\: \frac{1}{2} \| x_1 - x_2 \|^2 \\ 
  & \text{subject to} \quad
  \langle x_1 - z_1, Q_1 (x_1 - z_1) \rangle = 1, \quad 
  \langle x_2 - z_2, Q_2 (x_2 - z_2) \rangle = 1.
\end{split}
\end{equation}
Since this problem has exactly the same structure as problem \eqref{prob:EllipsoidDist_Convex}, it is natural to
extend the alternating direction method of multipliers for solving problem \eqref{prob:EllipsoidDist_Convex}
(Algorithms~\ref{alg:ADMM_convex} and \ref{alg:ADMM_sa_convex}) to the case of problem
\eqref{prob:EllipsoidDist_NonConvex}.

\subsection{The alternating direction method of multipliers}
\label{subsect:ADMM_Nonconvex}

In order to apply the ADMM to problem \eqref{prob:EllipsoidDist_NonConvex} let us us rewrite this problem as an
optimisation problem of the form \eqref{prob:SeparableConvexProblem}. To this end, as in
Section~\ref{subsect:ADMM_ConvexCase}, let $S_i$ be the square root of the matrix $Q_i$. Then
$$
  \bd \mathcal{E}_i 
  = \Big\{ x_i \in \mathbb{R}^d \Bigm| \big\langle x_i - z_i, Q_i (x_i - z_i) \big\rangle = 1 \Big\} 
  = \Big\{ x_i \in \mathbb{R}^d \Bigm| \| S_i (x_i - z_i) \|^2 = 1 \Big\}.
$$
Denote $c_i = S_i z_i$ and $y_i = S_i x_i - c_i$, $i \in \{ 1, 2 \}$. Then problem
\eqref{prob:EllipsoidDist_NonConvex} can be rewritten as follows:
\begin{equation} \label{prob:EllipsoidsDistNonconvex_ADMM}
  \min_{(x, y)} \: \frac{1}{2} \| x_1 - x_2 \|^2 \quad \text{s.t.} \quad
  S_i x_i - y_i = c_i, \quad \| y_i \| = 1, \quad i \in \{ 1, 2 \}.
\end{equation}
This problem is very similar to problem \eqref{prob:EllipsoidsDist_ADMM} with the only difference being the fact that
the inequality constraints $\| y_i \| \le 1$ were replaced by the corresponding equality constraints $\| y_i \| = 1$,
$i \in \{ 1, 2 \}$. Therefore the only change one has to make in the ADMM for problem \eqref{prob:EllipsoidsDist_ADMM}
in order to apply it to problem \eqref{prob:EllipsoidsDistNonconvex_ADMM} consists in the way one computes the next
iterate $y_i^{n + 1}$. Namely, one has to define $y_i^{n + 1}$ as a projection of the point 
$S_i x_i^{n + 1} - c_i - (1 / \tau) \lambda_i^n$ onto the unit sphere, not the unit ball. Note, however, that in 
the case when $S_i x_i^{n + 1} - c_i - (1 / \tau) \lambda_i^n = 0$ this projection is not unique and one can define
$y_i^{n + 1}$ as any point from the unit sphere.

It should be noted that since problem \eqref{prob:EllipsoidsDistNonconvex_ADMM} is nonconvex, one cannot expect 
the ADMM for solving this problem to converge for all values of the penalty parameter $\tau$. Furthermore, the penalty
adjustments strategy from \cite{HeYangWang} becomes inviable. To define a rule for updating the penalty parameter that
ensures convergence, observe that the ADMM can be viewed as a modification of the classical augmented Lagrangian methods
based on the Hestenes-Powell-Rockafellar augmented Lagrangian \cite{BirginMartinez} to the case of problems of the form
\eqref{prob:SeparableConvexProblem}. Therefore, it is natural to adopt the rule for updating $\tau$ similar to 
the one used in general augmented Lagrangian methods \cite{LuoSunWu2008,LuoSunLi2007,WangLi2009,BirginMartinez}. 

Since the ADMM performs two separate steps ($x$-step and $y$-step) on every iteration, from a purely theoretical
point of view it seems natural to use the following inequality as a criterion for updating the penalty parameter:
\begin{multline} \label{eq:PenaltyUpdate_Nonconvex}
  \max_{i \in \{ 1, 2 \}} \Big\{
  \| S_i x_i^{n + 1} - y_i^n - c_i \| - \eta \| S_i x_i^n - y_i^{n - 1} - c_i \|,
  \\
  \| S_i x_i^{n + 1} - y_i^{n + 1} - c_i \| - \eta \| S_i x_i^n - y_i^n - c_i \| \Big\} > 0
\end{multline}
(here $\eta \in (0, 1)$ is a fixed parameter). Namely, if this inequality is satisfied, one defines 
$\tau_{n + 1} = \beta \tau_n$ for some $\beta > 1$. Otherwise, one sets $\tau_{n + 1} = \tau_n$. As we will show
in the following section, this rule significantly simplifies convergence analysis and under some additional assumptions
guarantees that the sequence $\{ (x^n, y^n, \lambda^n) \}$ converges to a KKT point of problem 
\eqref{prob:EllipsoidsDistNonconvex_ADMM}. However, this penalty updating rule performed very poorly in our numerical
experiments. The experiments showed that the sequence $\| S_i x_i^{n + 1} - y_i^n - c_i \|$ is not monotone at initial
stages, which in accordance to \eqref{eq:PenaltyUpdate_Nonconvex} results in a rapid increase of the penalty parameter, 
ill-conditioning and, ultimately, the divergence of the method. Therefore, we propose to use a different penalty
updating rule, which showed itself best in numerical experiments. It should be noted that this rule is completely
heuristic and its theoretical analysis is a challenging open problem.

We used the following two inequalities as a criterion for updating the penalty parameter:
\begin{equation} \label{eq:PenaltyUpdate_Heuristic}
  \| S_i x_i^n - y_i^n - c_i \| \ge \varkappa, \enspace
  \| S_i x_i^{n + 1} - y_i^{n + 1} - c_i \| > \eta \| S_i x_i^n - y_i^n - c_i \|, \enspace i \in \{ 1, 2 \},
\end{equation}
where $\varkappa > 0$ and $\eta \in (0, 1)$ are fixed parameters. If inequalities \eqref{eq:PenaltyUpdate_Heuristic} are
satisfied, then $\tau_{n + 1} = \beta \tau_n$ for some $\beta > 1$. Otherwise, we put $\tau_{n + 1} = \tau_n$. This way
the penalty parameter is \textit{not} updated, if the infeasibility measure $\| S_i x_i^n - y_i^n - c_i \|$, 
$i \in \{ 1, 2 \}$, is sufficiently small or decreases with linear rate.

\begin{remark}
Let us explain the motivation behind criterion \eqref{eq:PenaltyUpdate_Heuristic} for increasing the penalty parameter.
Multiple numerical experiments with fixed penalty parameter $\tau$ demonstrated that one needs to increase $\tau$ only
if the infeasibility measure $\| S_i x_i^n - y_i^n - c_i \|$, $i \in \{ 1, 2 \}$, does not decrease with iterations.
Therefore, it is natural to use the second inequality in \eqref{eq:PenaltyUpdate_Heuristic} as a criterion for updating
$\tau$. In addition, numerical experiments showed that at the first stage the ADMM converges to a nearly feasible
point and only later the sequence starts to converge to a point satisfying KKT optimality conditions. As a result, at
later stages, when the value $\| S_i x_i^n - y_i^n - c_i \|$ is very small but nonzero, the second inequality in
\eqref{eq:PenaltyUpdate_Heuristic} might be violated, although there is no need to increase $\tau$ at this stage.
Therefore, the first inequality in \eqref{eq:PenaltyUpdate_Heuristic} is used as a safeguard to avoid an unnecessary
increase of the penalty parameter. A safe choice of parameter $\varkappa$ is $\varkappa < \varepsilon$, where
$\varepsilon > 0$ is from the stopping criterion discussed below, since this choice makes penalty updating criterion 
\eqref{eq:PenaltyUpdate_Heuristic} consistent with the stopping criterion and ensures that the penalty parameter is
increased until the infeasibility measure is within the limits specified by the stopping criterion. Nevertheless, in
our experiments we defined $\varkappa = 0.1$, since this value ensured convergence to a KKT point for all test
problems and allowed one to avoid unnecessary penalty updates, which might slow down the convergence.
\end{remark}

It remains to define a stopping criterion. To this end, let us consider optimality conditions as in the convex
case. The KKT conditions for problem \eqref{prob:EllipsoidsDistNonconvex_ADMM} have the form:
\begin{gather} \label{eq:OptmCond_Nonconv_x}
  x_1^* - x_2^* - S_1 \lambda_1^* = 0, \quad
  x_2^* - x_1^* - S_2 \lambda_2^* = 0, \\
  \lambda_i^* + \mu_i^* y_i^* = 0, \quad S_i x_i^* - y_i^* = c_i, \quad \| y_i^* \| = 1 \quad i \in \{ 1, 2 \},
  \label{eq:OptmCond_Nonconv_y}
\end{gather}
where $\mu_i^* \in \mathbb{R}$ is a Lagrange multiplier corresponding to the equality constraint $\| y_i \| - 1 = 0$. 
Note that the first condition in \eqref{eq:OptmCond_Nonconv_y} is satisfied iff 
$\lambda_i^* = \pm \| \lambda_i^* \| y_i^*$.

The violation of optimality conditions \eqref{eq:OptmCond_Nonconv_x} and \eqref{eq:OptmCond_Nonconv_y} can be measured
with the use of the following functions:
\begin{gather*}
  R_x(x, y, \lambda) = \begin{pmatrix} x_1 - x_2 - S_1 \lambda_1 \\ x_2 - x_1 - S_2 \lambda_2 \end{pmatrix}, \quad
  R_{y_i}^{\pm}(x, y, \lambda) = \Big( \lambda_i \pm \| \lambda_i \| y_i \Big), \quad i \in \{ 1, 2 \},
  \\
  R_c(x, y, \lambda) = \begin{pmatrix} S_1 x_1 - y_1 - c_1 \\ S_2 x_2 - y_2 - c_2 \end{pmatrix}.
\end{gather*}
It is natural to use the condition
\begin{multline} \label{eq:StoppingCriterion_Nonconvex}
  \| R_x(x^{n + 1}, y^{n + 1}, \lambda^{n + 1}) \|
  + \sum_{i = 1}^2 \min\Big\{ \big\| R_{y_i}^-(x^{n + 1}, y^{n + 1}, \lambda^{n + 1}) \big\|,
  \big\| R_{y_i}^+(x^{n + 1}, y^{n + 1}, \lambda^{n + 1}) \big\| \Big\} 
  \\
  + \| R_c(x^{n + 1}, y^{n + 1}, \lambda^{n + 1}) \| < \varepsilon
\end{multline}
as a stopping criterion for the method.

Thus, one can propose the following scheme of the ADMM (given in Algorithm~\ref{alg:ADMM_nonconvex}) for solving
nonconvex problem \eqref{prob:EllipsoidsDistNonconvex_ADMM}, which is equivalent to the original problem 
\eqref{prob:EllipsoidDist_NonConvex} of finding the distance between the boundaries of two ellipsoids. 

\begin{algorithm}[t]	\label{alg:ADMM_nonconvex}
\caption{The ADMM for finding the distance between the boundaries of ellipsoids.}

\noindent\textbf{Step~1.} {Choose $y^0, \lambda^0 \in \mathbb{R}^{2d}$, $\tau_0 > 0$, $\eta \in (0, 1)$, $\beta > 1$,
$\varkappa > 0$, and $\varepsilon > 0$. Set $n := 0$ and \textbf{flag} ${}={}$ \textbf{true}.}

\noindent\textbf{Step~2.} {Compute the square roots $S_i$ of the matrices $Q_i$ and vectors $c_i = S_i z_i$, 
$i \in \{1, 2 \}$.}

\noindent\textbf{Step~3.} {If \textbf{flag} ${}={}$ \textbf{true}, compute the Cholesky decomposition 
$H(\tau_n) = L L^T$ of the matrix $H(\tau_n)$ given in \eqref{eq:QuadFormHessian} and put 
\textbf{flag} ${}={}$ \textbf{false}.}

\noindent\textbf{Step~4.} {Compute 
$u^n = \left(\begin{smallmatrix} S_1(\lambda_1^n + \tau_n(y_1^n + c_1)) \\ S_2(\lambda_2^n + \tau_n(y_1^n + c_1))
\end{smallmatrix}\right)$ and solve the linear systems $L w^n = u^n$ and $L^T x^{n + 1} = w^n$.
}

\noindent\textbf{Step~5.} {Compute $v_i^n = S_i x_i^{n + 1} - c_i - (1 / \tau_n) \lambda_i^n$, $i \in \{ 1, 2 \}$. If
$v_i^n \ne 0$, put $y_i^{n + 1} = \frac{1}{\| v_i^n \|} v_i^n$; otherwise, choose $z_i \in \mathbb{R}^d$ with 
$\| z_i \| = 1$ and define $y_i^{n + 1} = z_i$, $i \in \{1, 2 \}$.
}

\noindent\textbf{Step~6.} {Put $\lambda_i^{n + 1} = \lambda_i^n - \tau_n(S_i x_i^{n + 1} - y_i^{n + 1} - c_i)$, 
$i \in \{ 1, 2 \}$.
}

\noindent\textbf{Step~7.} {Compute $R_x(x^{n + 1}, y^{n + 1}, \lambda^{n + 1})$, 
$R_{y_i}^{\pm}(x^{n + 1}, y^{n + 1}, \lambda^{n + 1})$, and $R_c(x^{n + 1}, y^{n + 1}, \lambda^{n + 1})$. 
If condition \eqref{eq:StoppingCriterion_Nonconvex} holds true, then \textbf{Stop}. 
}

\noindent\textbf{Step~8.} {If $n \ge 1$, define
$$	
  \tau_{n + 1} = \begin{cases}
    \beta \tau_n, & \text{if inequalities \eqref{eq:PenaltyUpdate_Heuristic} hold true,} 
    \\
    \tau_n, & \text{otherwise}
  \end{cases}
$$ 
If $\tau_{n + 1} \ne \tau_n$, set \textbf{flag} ${}={}$ \textbf{true}. Put $n := n + 1$ and go to \textbf{Step 3}.
}
\end{algorithm}

As in Algorithm~\ref{alg:ADMM_sa_convex}, we used the boolean variable ``flag'' in order to indicate whether
the Cholesky decomposition must be updated. This way the decomposition $H(\tau_n) = L L^T$ is recomputed only when 
the penalty parameter $\tau_n$ has been updated (i.e. increased). Let us also note that the situation when 
$v_i^n = 0$ on Step~5 of Algorithm~\ref{alg:ADMM_nonconvex} never occurred in our numerical experiments, provided 
the initial guess $y^0$ satisfied the constraints $\| y^0_1 \| = \| y^0_2 \| = 1$. Nevertheless, to avoid potentially
incorrect behaviour of the algorithm one must include the case $v_i^n = 0$ into Step~5.

Finally, observe that on Step~8 of Algorithm~\ref{alg:ADMM_nonconvex} one can check the validity of the
inequality
$$
  \big\| R_c(x^{n + 1}, y^{n + 1}, \lambda^{n + 1}) \big\| \le \eta 
  \big\| R_c(x^n, y^n, \lambda^n) \big\|
$$
instead of verifying two similar inequalities involving $\| S_i x_i^n - y_i^n - c_i \|$.

\begin{remark}
Let us point out that various existing versions of the ADMM for nonconvex problems (e.g. the proximal ADMM from
\cite{BotNguyen}) can be directly applied to problem \eqref{prob:EllipsoidsDistNonconvex_ADMM} instead of
Algorithm~\ref{alg:ADMM_nonconvex}. A comparative analysis of different versions of the ADMM for solving problem
\eqref{prob:EllipsoidsDistNonconvex_ADMM} is an interesting problem for future research.
\end{remark}

\subsection{Analysis of the method}
\label{subsect:Theoretical_Analysis}

Although convergence analysis of the ADMM for nonconvex problems has recently become an active area of research (see 
\cite{YangPongChen2017,HajinezhadShi,HongLuoRazaviyayn,BotNguyen,GuoHanWu2017,MagnussonRabbat2016,ZhangLuo2020,
PengChenZhu2015,WangYinZeng,ThemelisPatrinos} and the references therein), to the best of the author's knowledge no
existing results on convergence of various modifications of this method are applicable to
Algorithm~\ref{alg:ADMM_nonconvex}. One of the main differences between our algorithm and other existing versions of 
the ADMM for nonconvex problems is the fact that the penalty parameter in the ADMM is usually assumed to be constant
throughout iterations (cf. 
\cite{YangPongChen2017,HajinezhadShi,WangYinZeng,ThemelisPatrinos,BotNguyen,GuoHanWu2017,MagnussonRabbat2016,
ZhangLuo2020}) or to increase unboundedly \cite{MagnussonRabbat2016}, while in Algorithm~\ref{alg:ADMM_nonconvex} we
update the penalty parameter $\tau_n$ adaptively. It should be noted that although adaptive penalty updates can improve
overall performance of the ADMM, they significantly complicate convergence analysis even in the convex case, as is
pointed out in \cite{EcksteinYao}.

Since inequalities \eqref{eq:PenaltyUpdate_Heuristic} were chosen heuristically without any theoretical
foundation, we will analyse the method under the assumption that inequality \eqref{eq:PenaltyUpdate_Nonconvex} is
used as a criterion for penalty updates. Although, this criterion performed poorly in our numerical experiments, its
analysis provides an insight into the performance of the ADDM in the nonconvex case and the choice of parameters of this
method.

We will analyse convergence in two different cases: when the sequence of penalty parameters $\{ \tau_n \}$ is bounded
(i.e. when Algorithm~\ref{alg:ADMM_nonconvex} updates the penalty parameter $\tau_n$ only a finite number of times) and
when this sequence is unbounded. In the first case the method converges to a point satisfying KKT optimality conditions
for problem \eqref{prob:EllipsoidsDistNonconvex_ADMM} with linear rate. In the second case the analysis of convergence
is much more complicated and we provide only a partial result on the convergence of the method.

\begin{theorem}
Let $\{ x^n \}$, $\{ y^n \}$, and $\{ \lambda^n \}$ be the sequences generated by Algorithm~\ref{alg:ADMM_nonconvex}
with inequalities \eqref{eq:PenaltyUpdate_Heuristic} on Step~8 replaced by inequality
\eqref{eq:PenaltyUpdate_Nonconvex}. Suppose also that the sequence of penalty parameters $\{ \tau_n \}$ is bounded. Then
the sequence $\{ (x^n, y^n, \lambda^n) \}$ converges to a point $(x^*, y^*, \lambda^*)$ satisfying KKT optimality
conditions for problem \eqref{prob:EllipsoidsDistNonconvex_ADMM} and there exists $M > 0$ such that
\begin{equation} \label{eq:ConvergenceLinearRate}
  \| x^* - x^n \| \le M \eta^n, \quad \| y^* - y^n \| \le M \eta^n, \quad
  \| \lambda^* - \lambda^n \| \le M \eta^n \quad \forall n \in \mathbb{N}.
\end{equation}
\end{theorem}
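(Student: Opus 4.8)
The plan is to turn the boundedness of $\{\tau_n\}$ into geometric decay of two residuals, and then bootstrap that decay into convergence (with linear rate) of all three sequences. Since the criterion \eqref{eq:PenaltyUpdate_Nonconvex} only ever multiplies $\tau_n$ by $\beta > 1$, boundedness of $\{\tau_n\}$ forces finitely many updates, so there is an index $N$ and a value $\tau$ with $\tau_n = \tau$ for all $n \ge N$ and, crucially, with inequality \eqref{eq:PenaltyUpdate_Nonconvex} \emph{violated} for every $n \ge N$. Writing $r_i^n = S_i x_i^n - y_i^n - c_i$ and $s_i^n = S_i x_i^{n+1} - y_i^n - c_i$, the negation of \eqref{eq:PenaltyUpdate_Nonconvex} gives, for both $i \in \{1,2\}$ and all $n \ge N$, the two contractions $\| s_i^n \| \le \eta \| s_i^{n-1} \|$ and $\| r_i^{n+1} \| \le \eta \| r_i^n \|$. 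Hence $s_i^n \to 0$ and $r_i^n \to 0$ geometrically, i.e. $\| s_i^n \|, \| r_i^n \| \le C \eta^n$ for a suitable constant.

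The identity driving the argument is $y_i^{n+1} - y_i^n = s_i^n - r_i^{n+1}$, obtained by subtracting the definitions of $s_i^n$ and $r_i^{n+1}$. Combined with the geometric decay of both residuals this yields $\| y_i^{n+1} - y_i^n \| \le C' \eta^n$, so $\{ y_i^n \}$ is Cauchy and converges to some $y_i^*$ with $\| y_i^* \| = 1$ (each iterate lies on the unit sphere), and summing the geometric tail gives the rate $\| y_i^* - y_i^n \| \le M \eta^n$. For $n \ge N$ the multiplier update reads $\lambda_i^{n+1} - \lambda_i^n = - \tau r_i^{n+1}$, so the same tail-summation argument furnishes $\lambda_i^n \to \lambda_i^*$ at rate $M \eta^n$. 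Finally, inverting the constraint as $x_i^n = S_i^{-1}( y_i^n + c_i + r_i^n )$ (using that $S_i$ is positive definite, hence invertible) transfers the linear rate to $\{ x_i^n \}$, yielding all the estimates in \eqref{eq:ConvergenceLinearRate}.

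It then remains to check that $(x^*, y^*, \lambda^*)$ satisfies \eqref{eq:OptmCond_Nonconv_x}--\eqref{eq:OptmCond_Nonconv_y}. The feasibility relations $S_i x_i^* - y_i^* = c_i$ and $\| y_i^* \| = 1$ follow by passing to the limit in $r_i^n \to 0$ and in $\| y_i^n \| = 1$. For \eqref{eq:OptmCond_Nonconv_x} I would pass to the limit in the stationarity condition for the $x$-subproblem, whose first block reads $x_1^{n+1} - x_2^{n+1} - S_1 \lambda_1^n + \tau S_1 s_1^n = 0$; since $s_1^n \to 0$ the penalty term vanishes and one recovers $x_1^* - x_2^* - S_1 \lambda_1^* = 0$, symmetrically for the second block. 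For the parallelism condition $\lambda_i^* + \mu_i^* y_i^* = 0$, the decisive observation is that the projection step makes $\lambda_i^{n+1}$ parallel to $y_i^{n+1}$ \emph{at the iterate level}: substituting $v_i^n = \| v_i^n \| y_i^{n+1}$ into the $\lambda$-update gives $\lambda_i^{n+1} = \tau (1 - \| v_i^n \|) y_i^{n+1}$ (and the degenerate case $v_i^n = 0$ yields $\lambda_i^{n+1} = \tau y_i^{n+1}$, which is still parallel). Passing to the limit, with $\| v_i^n \| \to \| v_i^* \|$, produces a scalar $\mu_i^*$ with $\lambda_i^* = - \mu_i^* y_i^*$.

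The main obstacle, and the reason \eqref{eq:PenaltyUpdate_Nonconvex} is written with two separate families of terms, is the control of $\{ y_i^n \}$: the $y$-variables live on the \emph{nonconvex} unit sphere, where the projection is not globally nonexpansive, so feasibility decay alone ($r_i^n \to 0$) is not enough to pin down a limit for $y$. The first family of terms in \eqref{eq:PenaltyUpdate_Nonconvex} (the $s_i^n$) is exactly what is needed, through the identity $y_i^{n+1} - y_i^n = s_i^n - r_i^{n+1}$, to convert residual decay into summable increments of $y$ and thereby force convergence; dropping it would leave the $y$-iterates uncontrolled. Once this is established the remaining steps are routine tail-summation and continuity arguments, so I expect the bulk of the technical care to go into justifying the two contractions and the sphere-convergence of $y$.
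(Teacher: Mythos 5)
Your proof is correct, and its skeleton coincides with the paper's: boundedness of $\{\tau_n\}$ forces finitely many penalty updates, the negation of \eqref{eq:PenaltyUpdate_Nonconvex} beyond the last update yields the two geometric contractions, tail-summation gives convergence of $\lambda$, $y$, and $x$ with the rate \eqref{eq:ConvergenceLinearRate}, and one then passes to the limit in the optimality system. Two steps are executed differently, and arguably more cleanly, than in the paper. First, you control the $y$-increments by the direct identity $y_i^{n+1}-y_i^n = s_i^n - r_i^{n+1}$, while the paper routes the same estimate through the multiplier update, $y_i^{n+1}-y_i^n = \tau_n^{-1}(\lambda_i^{n+1}-\lambda_i^n)+\rho_i^n$; these are equivalent because $\lambda_i^{n+1}-\lambda_i^n = -\tau_n r_i^{n+1}$. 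Second, and more substantively, for the parallelism condition in \eqref{eq:OptmCond_Nonconv_y} the paper first disposes of the case $\lambda_i^*=0$ and then splits into two cases: $v_i^n\neq 0$ for all large $n$, handled by passing to the limit in the fixed-point relation $y_i^{n+1} = \bigl(y_i^{n+1}-\tau_n^{-1}\lambda_i^{n+1}\bigr)/\bigl\|y_i^{n+1}-\tau_n^{-1}\lambda_i^{n+1}\bigr\|$, versus a subsequence with $v_i^{n_k}=0$, handled separately. Your single identity $\lambda_i^{n+1} = \tau_n\bigl(1-\|v_i^n\|\bigr)\,y_i^{n+1}$, valid for degenerate and nondegenerate steps alike, collapses this case analysis into one limit passage, since $v_i^n \to y_i^* - \tau^{-1}\lambda_i^*$ once the terminal constant value $\tau$ of the penalty is reached. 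Both routes reach the same conclusion; yours saves the subsequence argument and makes explicit that the iterates themselves (not only the limit) satisfy the parallelism relation exactly.
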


\begin{proof}
From the fact that the sequence $\{ \tau_n \}$ is bounded it follows that there exists $n_0 \in \mathbb{N}$ such that
for all $n \ge n_0$ inequality \eqref{eq:PenaltyUpdate_Nonconvex} is not satisfied, i.e. for all $n \ge n_0$ one has 
\begin{equation} \label{eq:BoundedPenaltyParamCond}
\begin{split}
  \| S_i x_i^{n + 1} - y_i^n - c_i \| &\le \eta \| S_i x_i^n - y_i^{n - 1} - c_i \|, 
  \\
  \| S_i x_i^{n + 1} - y_i^{n + 1} - c_i \| &\le \eta \| S_i x_i^n - y_i^n - c_i \|
\end{split}
\end{equation}
(see Step~8 of Algorithm~\ref{alg:ADMM_nonconvex}). Therefore, according to Step~6 for any $i \in \{ 1, 2 \}$ and 
$n \ge n_0$ one has
$$
  \| \lambda_i^{n + 1} - \lambda_i^n \| = 
  \tau_n \big\| S_i x_i^{n + 1} - y_i^{n + 1} - c_i \big\| 
  \le \big( \sup_{n \in \mathbb{N}} \tau_n \big) \big\| S_i x_i^{n_0} - y_i^{n_0} - c_i \big\| \eta^{n - n_0}.
$$
Consequently, for any $m > n \ge n_0$ one has
$$
  \| \lambda_i^m - \lambda_i^n \| \le \sum_{k = n}^{m - 1} \| \lambda_i^{k + 1} - \lambda_i^k \|
  \le \big( \sup_{n \in \mathbb{N}} \tau_n \big) \big\| S_i x_i^{n_0} - y_i^{n_0} - c_i \big\| \eta^{n - n_0} 
  \frac{1 - \eta^{m - n}}{1 - \eta}.
$$
Hence bearing in mind the fact that $\eta \in (0, 1)$ one obtains that $\{ \lambda^n \}$ is a Cauchy sequence,
which implies that it converges to some $\lambda^* \in \mathbb{R}^{2d}$. Moreover, for any $n \ge n_0$ one has
\begin{multline*}
  \| \lambda_i^* - \lambda_i^n \| = \Big\| \sum_{k = n}^{\infty} (\lambda_i^{k + 1} - \lambda_i^k) \Big\|
  \le \sum_{k = n}^{\infty} \big( \sup_{n \in \mathbb{N}} \tau_n \big) 
  \big\| S_i x_i^{n_0} - y_i^{n_0} - c_i \big\| \eta^{k - n_0} = M_{\lambda} \eta^n,  
\end{multline*}
where
$$
  M_{\lambda} = \big( \sup_{n \in \mathbb{N}} \tau_n \big) \big\| S_i x_i^{n_0} - y_i^{n_0} - c_i \big\| 
  \frac{1}{(1 - \eta) \eta^{n_0}}.
$$
Thus, there exists $M > 0$ such that the third inequality in \eqref{eq:ConvergenceLinearRate} holds true for
all $n \in \mathbb{N}$.

Fix $i \in \{ 1, 2 \}$. Recall that for all $n \ge n_0$ inequality \eqref{eq:BoundedPenaltyParamCond} holds true.
Therefore 
\begin{equation} \label{eq:Inter_Iteration_Residual}
  S_i x_i^{n + 1} - y_i^n - c_i = \rho_i^n, \quad 
  \| \rho_i^n \| \le \big\| S_i x_i^{n_0} - y_i^{n_0 - 1} - c_i \big\| \eta^{n - n_0} \quad \forall n \ge n_0.
\end{equation}
Taking into account the definition of $\lambda_i^{n + 1}$ (see Step~6 of Algorithm~\ref{alg:ADMM_nonconvex}) one
obtains that
$$
  y_i^{n + 1} = \frac{1}{\tau_n} \big( \lambda_i^{n + 1} - \lambda_i^n \big) + S_i x_i^{n + 1} - c_i
  = \frac{1}{\tau_n} \big( \lambda_i^{n + 1} - \lambda_i^n \big) + y_i^n + \rho_i^n.
$$
Hence with the use of the third inequality in \eqref{eq:ConvergenceLinearRate} one gets that for any $n \ge n_0$ the
following inequalities hold true:
$$
  \big\| y_i^{n + 1} - y_i^n \big\| \le \frac{1}{\tau_n} \big\| \lambda_i^{n + 1} - \lambda_i^n \big\| + \| \rho_i^n \| 
  \le \frac{M}{\tau_0} \eta^n + \big\| S_i x_i^{n_0} - y_i^{n_0 - 1} - c_i \big\| \eta^{n - n_0}.
$$
Now, arguing in the same way as in the case of the sequence $\{ \lambda^n \}$ one can check that $\{ y_i^n \}$ is a
Cauchy sequence, which implies that it converges to some $y_i^*$, and the second inequality in
\eqref{eq:ConvergenceLinearRate} is satisfied for all $n \in \mathbb{N}$.

As was noted above, under the assumptions of the theorem inequality \eqref{eq:BoundedPenaltyParamCond} is satisfied
for all $n \ge n_0$. Therefore
$$
  S_i x_i^n - y_i^n - c_i = \xi_i^n, \quad 
  \| \xi_i^n \| \le \big\| S_i x_i^{n_0} - y_i^{n_0} - c_i \big\| \eta^{n - n_0} 
  \quad \forall n \ge n_0, \: i \in \{ 1, 2 \}.
$$
Hence with the use of the facts that $y^n \to y^*$ as $n \to \infty$ and $\eta \in (0, 1)$ one obtains that the sequence
$\{ x_i^n \}$ converges to the point $x_i^* = S_i^{-1} (y_i^* + c_i)$, that is, $S_i x_i^* - y_i^* = c_i$. Furthermore,
observe that $\| y_i^* \| = 1$, since $\| y_i^n \| = 1$ for all $n \in \mathbb{N}$ 
(see Step~5 of Algorithm~\ref{alg:ADMM_nonconvex}). Thus, $(x^*, y^*)$ is a feasible point of problem
\eqref{prob:EllipsoidsDistNonconvex_ADMM}. In addition, one has
$$
  \| x_i^* - x_i^n \| = \| S_i^{-1} \| \big\| y_i^* - y_i^n - \xi_i^n \big\| \le
  \| S_i^{-1} \| \big\| y_i^* - y_i^n \big\| 
  + \| S_i^{-1} \| \big\| S_i x_i^{n_0} - y_i^{n_0} - c_i \big\| \eta^{n - n_0}
$$
for all $n \ge n_0$. Therefore, the first inequality in \eqref{eq:ConvergenceLinearRate} follows directly from
the second one. 

Let us finally check that the triplet $(x^*, y^*, \lambda^*)$ satisfies KKT optimality conditions for problem
\eqref{prob:EllipsoidsDistNonconvex_ADMM} (see \eqref{eq:OptmCond_Nonconv_x}, \eqref{eq:OptmCond_Nonconv_y}). Indeed,
as was noted above, one has $S_i x_i^* - y_i^* = c_i$ and $\| y_i^* \| = 1$, $i \in \{ 1, 2 \}$, that is, $(x^*, y^*)$
is a feasible point of problem \eqref{prob:EllipsoidsDistNonconvex_ADMM}.

By definition the point $x^{n + 1}$ is the solution of the following system of equations
$$
  \begin{pmatrix} I_d + \tau_n Q_1 & - I_d \\ - I_d & I_d + \tau_n Q_2 \end{pmatrix} x^{n + 1} =
  \begin{pmatrix} S_1 & \mathbb{O}_d \\ \mathbb{O}_d & S_2 \end{pmatrix} \Big( \lambda^n + \tau_n( y^n + c ) \Big).
$$
(see Steps 3 and 4 of Algorithm~\ref{alg:ADMM_nonconvex} and equality \eqref{eq:LinearSystem_ConvexCase}). Therefore,
for any $n \in \mathbb{N}$ one has
\begin{align*}
  x_1^{n + 1} - x_2^{n + 1} - S_1 \lambda_1^n &= \tau_n S_1 \Big( - S_1 x_1^{n + 1} + y_1^n + c_1 \Big), \\
  x_2^{n + 1} - x_1^{n + 1} - S_2 \lambda_2^n &= \tau_n S_2 \Big( - S_2 x_2^{n + 1} + y_2^n + c_2 \Big)
\end{align*}
(here we used the equality $Q_i = S_i S_i$). Hence adding and subtracting $y_i^{n + 1}$ and taking into account the 
definition of $\lambda_i^{n + 1}$ (see Step~6) one gets that
\begin{align*}
  x_1^{n + 1} - x_2^{n + 1} - S_1 \lambda_1^n &= S_1 \big( \lambda_1^{n + 1} - \lambda_1^n \big)
  + \tau_n S_1 \big( y_1^n - y_1^{n + 1} \big), 
  \\
  x_2^{n + 1} - x_1^{n + 1} - S_2 \lambda_2^n &= S_2 \big( \lambda_2^{n + 1} - \lambda_2^n \big)
  + \tau_n S_2 \big( y_2^n - y_2^{n + 1} \big).
\end{align*}
Passing to the limit as $n \to \infty$ with the use of the fact that the sequence $\tau_n$ is bounded one obtains
\begin{equation} \label{eq:KKT_BoundedPenalty}
  x_1^* - x_2^* - S_1 \lambda_1^* = 0, \quad x_2^* - x_1^* - S_2 \lambda_2^* = 0, 
\end{equation}
i.e. optimality condition \eqref{eq:OptmCond_Nonconv_x} holds true.

Fix $i \in \{ 1, 2 \}$. If $\lambda_i^* = 0$, then $\lambda_i^* + 0 y_i^* = 0$. Therefore, suppose that 
$\lambda_i^* \ne 0$. Let us consider two cases. Suppose at first that there exists $n^* \in \mathbb{N}$ such that 
$v_i^n \ne 0$ for all $n \ge n^*$ (see Step~5 of Algorithm~\ref{alg:ADMM_nonconvex}). Then with the use of the
definitions of $y_i^{n + 1}$ and $\lambda_i^{n + 1}$ (Steps~5 and 6 of Algorithm~\ref{alg:ADMM_nonconvex}) one obtains
that
$$
  y_i^* = \lim_{n \to \infty} y_i^{n + 1} = 
  \lim_{n \to \infty}
  \frac{y_i^{n + 1} - \frac{1}{\tau_n} \lambda_i^{n + 1}}{\| y_i^{n + 1} - \frac{1}{\tau_n} \lambda_i^{n + 1} \|}
  = \frac{y_i^* - \frac{1}{\tau_{n_0}} \lambda_i^*}{\| y_i^* - \frac{1}{\tau_{n_0}} \lambda_i^* \|}.
$$
Consequently, $\lambda_i^* + \mu_i^* y_i^* = 0$, where $\mu_i^* = \tau_{n_0} - \| \tau_{n_0} y_i^* - \lambda_i^* \|$.
Thus, the triplet $(x^*, y^*, \lambda^*)$ satisfies KKT optimality conditions \eqref{eq:OptmCond_Nonconv_x},
\eqref{eq:OptmCond_Nonconv_y} for problem \eqref{prob:EllipsoidsDistNonconvex_ADMM} .

Suppose now that there exists a subsequence $\{ v_i^{n_k} \}$ such that $v_i^{n_k} = 0$ for all $k \in \mathbb{N}$.
Then by definition (see Step~5 of Algorithm~\ref{alg:ADMM_nonconvex}) one has
\[
  S_i x_i^{n_k + 1} - c_i = \frac{1}{\tau_{n_k}} \lambda_i^{n_k} = y_i^{n_k} + \rho_i^{n_k} 
  \quad \forall k \in \mathbb{N},
\]
where $\rho_i^n = S_i x_i^{n + 1} - y_i^n - c_i$. As was noted above, $\rho_i^n \to 0$ as $n \to \infty$. Therefore,
passing to the limit as $k \to \infty$ one obtains that $\lambda_i^* = \tau_{n_0} y_i^*$, which completes the proof.
\end{proof}

\begin{remark}
Note that the parameter $\eta \in (0, 1)$ from criterion \eqref{eq:PenaltyUpdate_Nonconvex} for penalty updates is used
in the upper estimates \eqref{eq:ConvergenceLinearRate} of the rate of convergence of the ADMM. Therefore $\eta$ must be
greater than the actual rate of convergence of the method, i.e.
$$
  \max\left\{ \limsup_{n \to \infty} \frac{\| x^* - x^{n + 1} \|}{\| x^* - x^n \|},
  \limsup_{n \to \infty} \frac{\| y^* - y^{n + 1} \|}{\| y^* - y^n \|}, 
  \limsup_{n \to \infty} \frac{\| \lambda^* - \lambda^{n + 1} \|}{\| \lambda^* - \lambda^n \|} \right\} < \eta,
$$
since otherwise the method would start increasing the penalty parameter $\tau_n$, when it is absolutely unnecessary. In
particular, it seems advisable to choose $\eta$ (in both \eqref{eq:PenaltyUpdate_Nonconvex} and
\eqref{eq:PenaltyUpdate_Heuristic}) to be sufficiently close to $1$ in order to avoid an unbounded increase
of the penalty parameter $\tau_n$ for ill-conditioned problems.
\end{remark}

Let us now consider the case when the penalty parameter $\tau_n$ increases unboundedly with iterations. In this case a
convergence analysis of the method is much more complicated and we present only an incomplete result, which can be
viewed as an intermediate lemma in a comprehensive convergence analysis of the ADMM for finding the distance between 
the boundaries of ellipsoids. Nevertheless, this result highlights some peculiarities of the method in the case when
the penalty parameter increases unboundedly, and we hope that it might help the interested reader to develop a more
complete convergence theory for the ADMM in the nonconvex case.

Let us first prove an auxiliary lemma, which, in particular, implies that for any sufficiently large value of the
penalty parameter $v_i^n \ne 0$ on Step~5 of Algorithm~\ref{alg:ADMM_nonconvex}, provided multipliers $\{ \lambda^n \}$
lie within a bounded set. 

\begin{lemma} \label{lem:ADMM_Nondegeneracy}
Let $\Lambda \subset \mathbb{R}^{2d}$ be a bounded set. Then there exists $\tau^* > 0$ such that for any 
$\tau \ge \tau^*$ and for all $\lambda \in \Lambda$ and 
$y = \left( \begin{smallmatrix} y_1 \\ y_2 \end{smallmatrix} \right) \in \mathbb{R}^{2d}$ with 
$\| y_1 \| = \| y_2 \| = 1$ solution $x^*$ of the system of linear equations
\[
  \begin{pmatrix} I_d + \tau Q_1 & - I_d \\ - I_d & I_d + \tau Q_2 \end{pmatrix} x^* =
  \begin{pmatrix} S_1 & \mathbb{O}_d \\ \mathbb{O}_d & S_2 \end{pmatrix} \Big( \lambda + \tau( y + c ) \Big).
\]
satisfies the conditions $S_i x_i^* - c_i - (1/\tau) \lambda_i \ne 0$, $i \in \{ 1, 2 \}$.
\end{lemma}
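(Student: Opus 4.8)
The plan is to show that, as the penalty parameter $\tau$ grows, the quantity $v_i := S_i x_i^* - c_i - (1/\tau)\lambda_i$ converges to $y_i$, uniformly over $\lambda \in \Lambda$ and over $y$ with $\| y_1 \| = \| y_2 \| = 1$. Since $\| y_i \| = 1$, such a uniform limit immediately forces $\| v_i \|$ to be bounded away from zero for all large $\tau$, which is exactly the nondegeneracy claim. (Existence and uniqueness of $x^*$ for each fixed $\tau > 0$ is not an issue, since $H(\tau)$ was already shown to be positive definite.) The heuristic behind the limit is that for large $\tau$ the dominant balance in row $i$ of the system is $\tau Q_i x_i^* \approx \tau S_i(y_i + c_i)$, i.e. $S_i x_i^* \approx y_i + c_i$, so that $v_i \approx y_i$.

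To make this precise I would introduce the candidate point $\widehat{x}$ with blocks $\widehat{x}_i = S_i^{-1}(y_i + c_i)$ and estimate the residual $r(\tau) := b(\tau) - H(\tau)\widehat{x}$, where $b(\tau)$ is the right-hand side of the system. The crucial point is the identity $Q_i S_i^{-1} = S_i$ (valid because $Q_i = S_i S_i$), which makes the two $O(\tau)$ terms in $H(\tau)\widehat{x}$ and in $b(\tau)$ cancel exactly; what remains in block $i$ is $S_i \lambda_i$ minus a fixed linear combination of $y_1 + c_1$ and $y_2 + c_2$ produced by the diagonal $I_d$ and off-diagonal $-I_d$ blocks. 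Thus $r(\tau)$ does not depend on $\tau$ at all, and $\| r(\tau) \| \le C_0$ for a constant $C_0$ depending only on $\sup_{\lambda \in \Lambda}\| \lambda \|$, on the fixed normalisation $\| y_i \| = 1$, and on the data $S_i, c_i$. This is precisely the uniformity that the statement demands.

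Next I would control the inverse. The identity established earlier gives, for every $x \in \mathbb{R}^{2d}$,
\[
  \langle x, H(\tau) x \rangle = \| x_1 - x_2 \|^2 + \tau \langle x_1, Q_1 x_1 \rangle + \tau \langle x_2, Q_2 x_2 \rangle \ge \tau \mu \| x \|^2,
\]
where $\mu := \min\{ \lambda_{\min}(Q_1), \lambda_{\min}(Q_2) \} > 0$. Hence $\| H(\tau)^{-1} \| \le 1/(\tau\mu)$, and since $x^* - \widehat{x} = H(\tau)^{-1} r(\tau)$, we obtain the uniform bound $\| x^* - \widehat{x} \| \le C_0/(\tau\mu)$.

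Finally I would translate this back to $v_i$. Writing $x_i^* = S_i^{-1}(y_i + c_i) + e_i$ with $\| e_i \| \le C_0/(\tau\mu)$ gives $S_i x_i^* - c_i = y_i + S_i e_i$, whence $\| v_i - y_i \| \le \| S_i \|\, C_0/(\tau\mu) + (1/\tau)\sup_{\lambda \in \Lambda}\| \lambda \| =: C_1/\tau$. Therefore $\| v_i \| \ge \| y_i \| - \| v_i - y_i \| \ge 1 - C_1/\tau$, and choosing, say, $\tau^* = 2 C_1$ yields $\| v_i \| \ge 1/2 > 0$ for all $\tau \ge \tau^*$, as required. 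I do not expect a genuine obstacle; the single step that must not be overlooked is the exact cancellation of the $O(\tau)$ terms via $Q_i S_i^{-1} = S_i$, since without it the residual would grow linearly in $\tau$ and the whole estimate would collapse. The only remaining care is bookkeeping: making sure every constant is controlled through $\sup_{\lambda \in \Lambda}\| \lambda \|$ and $\| y_i \| = 1$, so that $\tau^*$ can be chosen independently of the particular $\lambda$ and $y$.
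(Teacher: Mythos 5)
Your proof is correct, but it takes a genuinely different route from the paper's. The paper argues variationally and by contradiction: it observes that $x^*$ is the global minimiser of the convex quadratic
\begin{equation*}
  g(x) = \frac{1}{2} \| x_1 - x_2 \|^2
  + \sum_{i = 1}^2 \frac{\tau}{2} \Big\| S_i x_i - y_i - c_i - \frac{1}{\tau} \lambda_i \Big\|^2 ,
\end{equation*}
compares its value with the value at the point $\widehat{x}_i = S_i^{-1}\big( y_i + c_i + \tau^{-1} \lambda_i \big)$ at which the penalty terms vanish identically, so that $g(x^*) \le g(\widehat{x}) = \frac{1}{2}\| \widehat{x}_1 - \widehat{x}_2 \|^2 \le K/2$ with $K$ uniform over $\Lambda$ and the unit spheres, and then notes that degeneracy $S_i x_i^* - c_i - \tau^{-1}\lambda_i = 0$ would force the $i$-th penalty term to equal $\frac{\tau}{2}\| y_i \|^2 = \tau/2$, hence $g(x^*) \ge \tau/2 > K/2 \ge g(x^*)$ once $\tau > K$ --- a contradiction. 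You instead run a direct perturbation analysis: you take the comparison point without the multiplier correction, $\widehat{x}_i = S_i^{-1}(y_i + c_i)$, check via the exact cancellation $Q_i S_i^{-1} = S_i$ that the residual $b(\tau) - H(\tau)\widehat{x}$ is independent of $\tau$, and combine this with the coercivity bound $\langle x, H(\tau) x \rangle \ge \tau \mu \| x \|^2$ (immediate from the quadratic form identity already stated in the paper) to get $\| x^* - \widehat{x} \| \le C_0/(\tau\mu)$, whence $v_i = y_i + O(1/\tau)$ uniformly. Each approach has its merits: the paper's argument is shorter, avoids any spectral estimate of $H(\tau)^{-1}$, and reuses the same comparison-point trick that appears again in the proof of the subsequent proposition; yours yields a strictly stronger, quantitative conclusion --- $v_i$ converges to $y_i$ at rate $O(1/\tau)$ uniformly in $(\lambda, y)$, not merely $v_i \ne 0$ --- which also explains geometrically why the projection step in Algorithm~3 is well behaved for large $\tau$. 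One piece of trivial bookkeeping: your choice $\tau^* = 2C_1$ should be replaced by, say, $\tau^* = \max\{1, 2C_1\}$ to guarantee $\tau^* > 0$ in the degenerate case $C_1 = 0$.
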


\begin{proof}
Let us verify that the statement of the lemma holds true for any $\tau^* > \max\{ 1, K \}$, where
\[
  K =
  \Big( \sum_{i = 1}^2 \| S_i^{-1} \| \big( 1 + \| c_i \| + \sup_{\lambda \in \Lambda} \| \lambda_i \| \big) \Big)^2.
\]
Indeed, fix any $\tau \ge \tau^* > \max\{ 1, K \}$, $\lambda \in \Lambda$, and $y \in \mathbb{R}^{2d}$ with 
$\| y_1 \| = \| y_2 \| = 1$. Let $x^*$ be a solution of the corresponding system of linear equations. 

As is easily seen, $x^*$ is a point of global minimum of the convex function
\[
  g(x) = \frac{1}{2} \| x_1 - x_2 \|^2 
  + \sum_{i = 1}^2 \frac{\tau}{2} \Big\| S_i x_i - y_i - c_i - \frac{1}{\tau} \lambda_i \Big\|^2,
\]
since it satisfies the equality $\nabla g(x^*) = 0$. Consequently, $g(x^*) \le g(\widehat{x})$, where
\[
  \widehat{x}_i = S_i^{-1} \Big( y_i + c_i + \frac{1}{\tau} \lambda_i \Big), \quad i \in \{ 1, 2 \}.
\]
Taking into account the fact that $\tau \ge \tau^* > 1$ one obtains that
\[
  \| \widehat{x}_1 - \widehat{x}_2 \| \le \| \widehat{x}_1 \| + \| \widehat{x}_2 \|
  \le \sum_{i = 1}^2 \| S_i^{-1} \| \Big( 1 + \| c_i \| + \frac{1}{\tau} \| \lambda_i \| \Big) \le \sqrt{K}.
\]
Therefore
\[
  g(x^*) \le g(\widehat{x}) = \frac{1}{2} \| \widehat{x}_1 - \widehat{x}_2 \|^2 \le \frac{K}{2} 
  < \frac{\tau_*}{2} \le \frac{\tau}{2}.
\]
On the other hand, if $S_i x_i^* - c_i - (1/\tau) \lambda_i = 0$ for some $i \in \{ 1, 2 \}$, then
\[
  g(x^*) \ge \frac{\tau}{2} \| y_i \|^2 = \frac{\tau}{2} > g(x^*),
\]
which is impossible. Thus, $S_i x_i^* - c_i - (1/\tau) \lambda_i \ne 0$, $i \in \{ 1, 2 \}$.
\end{proof}

Next we present a partial result on convergence of Algorithm~\ref{alg:ADMM_nonconvex} in the case when the penalty
parameter $\{ \tau_n \}$ increases unboundedly as $n \to \infty$.

\begin{proposition}
Let $\{ x^n \}$, $\{ y^n \}$, and $\{ \lambda^n \}$ be the sequences generated by Algorithm~\ref{alg:ADMM_nonconvex} 
with inequalities \eqref{eq:PenaltyUpdate_Heuristic} on Step~8 replaced by inequality
\eqref{eq:PenaltyUpdate_Nonconvex}. Suppose that the sequence of penalty parameters $\{ \tau_n \}$ is unbounded, but 
the sequence of multipliers $\{ \lambda^n \}$ is bounded. Then the sequence $\{ (x^n, y^n) \}$ is bounded as well and
all its limit points are feasible for problem \eqref{prob:EllipsoidsDistNonconvex_ADMM}. Moreover, all limit points of
the sequence $\{ (x^n, y^n, \lambda^n) \}$ satisfy KKT optimality conditions for problem
\eqref{prob:EllipsoidsDistNonconvex_ADMM} if and only if $\tau_n \| y^{n + 1} - y^n \| \to 0$ as $n \to \infty$.
\end{proposition}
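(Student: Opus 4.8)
The plan is to exploit the very explicit form of the iterates. Since $\| y_i^n \| = 1$ for every $n$ (Step~5 of Algorithm~\ref{alg:ADMM_nonconvex}), the sequence $\{ y^n \}$ already lies in a compact set. To control $\{ x^n \}$, I would first invoke Lemma~\ref{lem:ADMM_Nondegeneracy}: because $\{ \lambda^n \}$ is bounded and $\tau_n \to \infty$, there is $n_0$ with $v_i^n \ne 0$ for all $n \ge n_0$, so Step~5 uses the spherical-projection formula $y_i^{n+1} = v_i^n / \| v_i^n \|$. Feeding this into the multiplier update of Step~6 yields the identity $\lambda_i^{n+1} = -\tau_n (\| v_i^n \| - 1) y_i^{n+1}$, and in particular $S_i x_i^{n+1} - y_i^{n+1} - c_i = (\lambda_i^n - \lambda_i^{n+1}) / \tau_n$. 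As $\{ \lambda^n \}$ is bounded and $\tau_n \to \infty$, the right-hand side tends to $0$, so $R_c(x^{n+1}, y^{n+1}, \lambda^{n+1}) \to 0$ and $\| v_i^n \| \to 1$. The latter bounds $\{ v_i^n \}$, whence $x_i^{n+1} = S_i^{-1}(v_i^n + c_i + (1/\tau_n)\lambda_i^n)$ is bounded, establishing boundedness of $\{ (x^n, y^n) \}$. Passing to the limit in $R_c \to 0$, together with $\| y_i^* \| = 1$, gives feasibility of every limit point.

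For the KKT characterisation I would observe that two of the three groups of conditions hold at every limit point irrespective of the extra hypothesis. Feasibility has just been shown, and the collinearity condition $\lambda_i^* + \mu_i^* y_i^* = 0$ comes for free from $\lambda_i^{n+1} = -\tau_n (\| v_i^n \| - 1) y_i^{n+1}$: writing $\mu_i^n = \tau_n(\| v_i^n \| - 1) = -\langle \lambda_i^{n+1}, y_i^{n+1} \rangle$, which is a bounded scalar, any subsequential limit gives $\lambda_i^* = -\mu_i^* y_i^*$ with $\mu_i^* = -\langle \lambda_i^*, y_i^* \rangle$. Hence the only KKT condition that can fail at a limit point is the $x$-condition \eqref{eq:OptmCond_Nonconv_x}, i.e. $R_x(x^*, y^*, \lambda^*) = 0$.

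The crux is then a clean identity for $R_x$. Repeating the algebraic manipulation from the proof of the preceding theorem — start from the linear system defining $x^{n+1}$, use $Q_i = S_i S_i$, add and subtract $y_i^{n+1}$, and substitute the Step~6 definition of $\lambda_i^{n+1}$ — I expect to reach $R_x(x^{n+1}, y^{n+1}, \lambda^{n+1}) = \bigl( \tau_n S_1(y_1^n - y_1^{n+1}), \, \tau_n S_2(y_2^n - y_2^{n+1}) \bigr)$. Since each $S_i$ is invertible, $\| R_x(x^{n+1}, y^{n+1}, \lambda^{n+1}) \|$ is bounded above and below by positive multiples of $\tau_n \| y^{n+1} - y^n \|$; consequently $R_x \to 0$ along the whole sequence if and only if $\tau_n \| y^{n+1} - y^n \| \to 0$.

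Finally I would assemble the equivalence. If $\tau_n \| y^{n+1} - y^n \| \to 0$, then $R_x \to 0$, so by continuity every limit point satisfies $R_x = 0$, and combined with feasibility and collinearity it is a KKT point. Conversely, if every limit point is KKT but $\tau_n \| y^{n+1} - y^n \| \not\to 0$, then $\| R_x(x^{n+1}, y^{n+1}, \lambda^{n+1}) \| \ge \delta$ along some subsequence; using boundedness of $\{ (x^n, y^n, \lambda^n) \}$ I extract a convergent sub-subsequence whose limit is a limit point with $\| R_x \| \ge \delta > 0$, contradicting the assumption. I expect this converse to be the main obstacle: it cannot be argued limit-point by limit-point but relies essentially on the global boundedness of the entire triple sequence — the boundedness of $\{ (x^n, y^n) \}$ proved above together with the standing hypothesis that $\{ \lambda^n \}$ is bounded — to convert a failure of $R_x \to 0$ into a genuine non-KKT limit point.
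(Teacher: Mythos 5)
Your proposal is correct, and its backbone coincides with the paper's own proof: the decisive ingredients in both are the identity obtained from the linear system defining $x^{n+1}$, namely $R_x(x^{n+1},y^{n+1},\lambda^{n+1}) = \bigl( \tau_n S_1(y_1^n-y_1^{n+1}),\, \tau_n S_2(y_2^n-y_2^{n+1}) \bigr)$ (equation \eqref{eq:SeqOptimality_in_x} in the paper), the lower bound through $\|S_i^{-1}\|$, Lemma~\ref{lem:ADMM_Nondegeneracy}, and a compactness argument. Where you differ is in the supporting structure. First, for boundedness of $\{x^n\}$ the paper argues directly from the minimization property of $x^{n+1}$: it bounds $g_n(x^{n+1}) \le g_n(\widehat{x}^n)$ for the augmented objective $g_n$ and derives $S_i x_i^{n+1} - y_i^n - c_i \to 0$ by contradiction, whereas you route through Lemma~\ref{lem:ADMM_Nondegeneracy} and the closed-form multiplier identity $\lambda_i^{n+1} = -\tau_n(\|v_i^n\|-1) y_i^{n+1}$, obtaining $\|v_i^n\| \to 1$, $R_c \to 0$ and boundedness in one stroke; both routes are valid (and, in fact, both are heavier than strictly necessary, since Step~6 alone gives $S_i x_i^{n+1} - y_i^{n+1} - c_i = (\lambda_i^n - \lambda_i^{n+1})/\tau_n$, which together with $\|y_i^{n+1}\| = 1$, boundedness of $\{\lambda^n\}$ and $\tau_n \ge \tau_0$ already yields boundedness of $\{x^n\}$ and feasibility of limit points). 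Second, your observation that the collinearity condition $\lambda_i^* + \mu_i^* y_i^* = 0$ holds at \emph{every} limit point unconditionally --- because $\mu_i^n = -\langle \lambda_i^{n+1}, y_i^{n+1}\rangle$ is a continuous expression in the iterates, so no separate extraction of a convergent subsequence of multipliers $\mu_i^n$ is needed --- lets you reduce the whole statement to the equivalence ``$R_x \to 0$ along the sequence iff $\tau_n\|y^{n+1}-y^n\| \to 0$'', which you settle symmetrically via a two-sided bound. The paper instead treats the two implications asymmetrically: the forward one by contradiction using only the lower bound, the reverse one by passing to the limit in \eqref{eq:SeqOptimality_in_x} and only then establishing collinearity via Lemma~\ref{lem:ADMM_Nondegeneracy}. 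The mathematical content is the same, but your packaging isolates the single KKT condition \eqref{eq:OptmCond_Nonconv_x} that can actually fail at a limit point, which makes the ``if and only if'' more transparent; the paper's version keeps the boundedness part self-contained rather than dependent on the lemma.
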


\begin{proof}
For any $n \in \mathbb{N}$ denote $\xi_i^n = S_i x_i^{n + 1} - y_i^n - c_i$, $i \in \{ 1, 2 \}$. Let us show that
$\xi_i^n \to 0$ as $n \to \infty$. Indeed, denote $\widehat{x}_i^n = S_i^{-1}(y_i^n + c_i + (1 / \tau_n) \lambda_i^n)$
for all $n \in \mathbb{N}$ and $i \in \{ 1, 2 \}$. The sequence $\{ \widehat{x}^n \}$ is bounded due to the boundedness
of the sequence $\{ \lambda^n \}$ and the fact that $\| y_i^n \| = 1$ for all $n \in \mathbb{N}$ (see Step~5 of
Algorithm~\ref{alg:ADMM_nonconvex}). 

Recall that by definition $x^{n + 1}$ is a point of global minimum of the quadratic function $f_n(x)$ defined in
\eqref{eq:QuadForm} with $\tau = \tau_n$. Therefore $x^{n + 1}$ is also a point of global minimum of the function
\begin{align*}
  g_n(x) &= f_n(x) 
  + \sum_{i = 1}^2 \Big( \langle \lambda_i^n, y_i^n + c_i \rangle + \frac{1}{2\tau_n} \| \lambda_i^n \|^2 \Big)
  \\
  &= \frac{1}{2} \| x_1 - x_2 \|^2 
  + \sum_{i = 1}^2 \frac{\tau_n}{2} \Big\| S_i x_i - y_i^n - c_i - \frac{1}{\tau_n} \lambda_i^n \Big\|^2,
\end{align*}
which implies that $g_n(x^{n + 1}) \le g_n(\widehat{x}^n)$ for all $n \in \mathbb{N}$. Hence with the use of the
equality $g_n(\widehat{x}^n) = 0.5 \| \widehat{x}_1^{n} - \widehat{x}_2^n \|^2$ and the boundedness of the sequence 
$\{ \widehat{x}^n \}$ one obtains that the sequence $\{ g_n(x^{n + 1}) \}$ is bounded above. 

Arguing by reductio ad absurdum, suppose that the sequence $\{ \xi_1^n \}$ does not converge to zero (the convergence
of the sequence $\{ \xi_2^n \}$ to zero is proved in the same way). Then there exist $\delta > 0$ and a subsequence 
$\{ \xi_1^{n_k} \}$ such that $\| \xi_1^{n_k} \| \ge \delta$ for all $k \in \mathbb{N}$. Observe that for any
$k \in \mathbb{N}$ one has
\begin{align*}
  g_{n_k}(x^{n_k + 1}) &\ge \frac{1}{2} \| x_1^{n_k + 1} - x_2^{n_k + 1} \|^2 + 
  \sum_{i = 1}^2 \Big( \frac{\tau_{n_k}}{2} \| \xi_i^{n_k} \|^2 - \| \lambda_i^{n_k} \|  \| \xi_i^{n_k} \| 
  + \frac{1}{2\tau_{n_k}} \| \lambda_i^{n_k} \|^2 \Big)
  \\
  &\ge 
  \frac{1}{2} \Big(\sqrt{\tau_{n_k}} \| \xi_1^{n_k} \| - \frac{1}{\sqrt{\tau_{n_k}}} \| \lambda_1^{n_k} \| \Big)^2. 
\end{align*}
Hence bearing in mind the facts that by our assumptions the sequence $\{ \lambda^n \}$ is bounded, while the sequence
$\{ \tau_n \}$ increases unboundedly one can readily verify that $g_{n_k}(x^{n_k + 1}) \to + \infty$ as $k \to \infty$,
which contradicts the fact that the sequence $\{ g_n(x^{n + 1}) \}$ is bounded above. 

Thus, $\xi_i^n \to 0$ as $n \to \infty$, $i \in \{ 1, 2 \}$. Recall that $\| y_i^n \| = 1$ for all $n \in \mathbb{N}$
(see Step~5 of Algorithm~\ref{alg:ADMM_nonconvex}). Therefore
$$
  \| x_i^{n + 1} \| = \big\| S_i^{-1} (y_i^n + c_i + \xi_i^n) \| \le 
  \| S_i^{-1} \| \big( 1 + \| c_i \| + \| \xi_i^n \| \big),
$$
which implies that the sequence $\{ (x^n, y^n) \}$ is bounded. Let $(x^*, y^*)$ be a limit point of this sequence, i.e.
there exists a subsequence $\{ (x^{n_k}, y^{n_k}) \}$ converging to $(x^*, y^*)$. Let us check that this point is 
feasible for problem \eqref{prob:EllipsoidsDistNonconvex_ADMM}.

Indeed, from the fact that $\| y_i^n \| = 1$ for all $n \in \mathbb{N}$ it follows that $\| y_i^* \| = 1$. Taking into
account the definition of $\lambda_i^{n + 1}$ (see Step~6 of Algorithm~\ref{alg:ADMM_nonconvex}) one obtains that
$$
  \big\| S_i x_i^{n + 1} - y_i^{n + 1} - c_i \big\| 
  \le \frac{2}{\tau_n} \sup_{n \in \mathbb{N} \cup \{ 0 \}} \| \lambda_i^n \|.
$$
Therefore, $\zeta_i^{n + 1} = S_i x_i^{n + 1} - y_i^{n + 1} - c_i \to 0$ as $n \to \infty$ due the boundedness of 
the sequence $\{ \lambda^n \}$ and the fact that $\tau_n \to + \infty$ as $n \to \infty$. Hence passing to the limit as
$k \to \infty$ in the equality
$$
  x_i^{n_k} = S_i^{-1} (y_i^{n_k} + c_i + \zeta_i^{n_k})
$$
one obtains that $S_i x_i^* - y_i^* = c_i$, that is, the point $(x^*, y^*)$ is feasible for problem
\eqref{prob:EllipsoidsDistNonconvex_ADMM}.

Suppose now that all limit points of the sequence $\{ (x^n, y^n, \lambda^n) \}$ satisfy KKT optimality conditions for
problem \eqref{prob:EllipsoidsDistNonconvex_ADMM}. Arguing by reductio ad absurdum, assume that the sequence 
$\tau_n \| y^{n + 1} - y^n \|$ does not converge to zero. Then there exist a subsequence of this sequence and 
$\varepsilon > 0$ such that $\tau_{n_k - 1} \| y^{n_k} - y^{n_k - 1} \| \ge \varepsilon$. The sequence
$\{ (x^{n_k}, y^{n_k}, \lambda^{n_k}) \}$ is bounded. Replacing this sequence, if necessary, with its subsequence one
can suppose that $\{ (x^{n_k}, y^{n_k}, \lambda^{n_k}) \}$ converges to some $(x^*, y^*, \lambda^*)$. By definition the
point $x^{n + 1}$ is the solution of the following system of equations
$$
  \begin{pmatrix} I_d + \tau_n Q_1 & - I_d \\ - I_d & I_d + \tau_n Q_2 \end{pmatrix} x^{n + 1} =
  \begin{pmatrix} S_1 & \mathbb{O}_d \\ \mathbb{O}_d & S_2 \end{pmatrix} \Big( \lambda^n + \tau_n( y^n + c ) \Big).
$$
(see Steps 3 and 4 of Algorithm~\ref{alg:ADMM_nonconvex} and equality \eqref{eq:QuadFormHessian}). Therefore, for any
$k \in \mathbb{N}$ one has
\begin{align*}
  x_1^{n_k} - x_2^{n_k} - S_1 \lambda_1^{n_k - 1} 
  &= \tau_{n_k - 1} S_1 \Big( - S_1 x_1^{n_k} + y_1^{n_k - 1} + c_1 \Big),  \\
  x_2^{n_k} - x_1^{n_k} - S_2 \lambda_2^{n_k - 1} 
  &= \tau_{n_k - 1} S_2 \Big( - S_2 x_2^{n_k} + y_2^{n_k - 1} + c_2 \Big)
\end{align*}
(here we used the equality $Q_i = S_i S_i$). Adding and subtracting $y_i^{n_k}$ and taking into account the 
definition of $\lambda_i^{n + 1}$ (see Step~6) one obtains that
\begin{equation} \label{eq:SeqOptimality_in_x}
\begin{split}
  x_1^{n_k} - x_2^{n_k} - S_1 \lambda_1^{n_k} 
  &= \tau_{n_k - 1} S_1 \Big( y_1^{n_k - 1} - y_1^{n_k} \Big),
  \\
  x_2^{n_k} - x_1^{n_k} - S_2 \lambda_2^{n_k} 
  &= \tau_{n_k - 1} S_2 \Big( y_2^{n_k - 1} - y_2^{n_k} \Big)
\end{split}
\qquad \forall k \in \mathbb{N}.
\end{equation}
Therefore for all $k \in \mathbb{N}$ one has
\begin{align*}
  \big\| x_1^{n_k} - x_2^{n_k} - S_1 \lambda_1^{n_k} \big\| 
  &\ge \frac{1}{\| S_1^{-1} \|} \tau_{n_k - 1} \big\| y_1^{n_k - 1} - y_1^{n_k} \big\| 
  \ge \frac{\varepsilon}{\| S_1^{-1} \|}, 
  \\
  \big\| x_2^{n_k} - x_1^{n_k} - S_2 \lambda_2^{n_k} \big\|
  &\ge \frac{1}{\| S_2^{-1} \|} \tau_{n_k - 1} \big\| y_2^{n_k - 1} - y_2^{n_k} \big\| 
  \ge \frac{\varepsilon}{\| S_2^{-1} \|},
\end{align*}
which contradicts the fact that by our assumption all limit points of the sequence $\{ (x^n, y^n, \lambda^n) \}$
(including $(x^*, y^*, \lambda^*)$) satisfy KKT optimality conditions for problem 
\eqref{prob:EllipsoidsDistNonconvex_ADMM} (see \eqref{eq:OptmCond_Nonconv_x}).

Suppose finally that $\tau_n \| y^{n + 1} - y^n \| \to 0$ as $n \to \infty$. Let $(x^*, y^*, \lambda^*)$ be a limit
point of the sequence $\{ (x^n, y^n, \lambda^n) \}$, i.e. there exists a subsequence 
$\{ (x^{n_k}, y^{n_k}, \lambda^{n_k}) \}$ converging to $(x^*, y^*, \lambda^*)$. Passing to the limit as $k \to \infty$
in \eqref{eq:SeqOptimality_in_x} one obtains that
$$
  x_1^* - x_2^* - S_1 \lambda_1^* = 0, \quad x_2^* - x_1^* - S_2 \lambda_2^* = 0.
$$
Recall that by our assumption the sequence $\{ \lambda_n \}$ is bounded and $\tau_n \to + \infty$ as $n \to \infty$.
Therefore by Lemma~\ref{lem:ADMM_Nondegeneracy} there exists $n_0 \in \mathbb{N}$ such that for all $n \ge n_0$ one has
$v_i^n \ne 0$, $i \in \{ 1, 2 \}$ (see Step 5 of Algorithm~\ref{alg:ADMM_nonconvex}). Hence by the definitions of 
$y^{n + 1}$ and $\lambda^{n + 1}$ (Steps 5 and 6 of Algorithm~\ref{alg:ADMM_nonconvex}) one has
$$
  y_i^{n + 1} = \frac{y_i^{n + 1} - \tau_n^{-1} \lambda_i^{n + 1}}{\| y_i^{n + 1} - \tau_n^{-1} \lambda_i^{n + 1} \|},
  \quad i \in \{ 1, 2 \} \quad \forall n \ge n_0.
$$
or, equivalently,
\begin{equation} \label{eq:SequentialOptim_lambda}
  \lambda_i^{n + 1} = \mu_i^{n + 1} y_i^{n + 1}, \quad 
  \mu_i^{n + 1} = \tau_n - \big\| \tau_n y_i^{n + 1} - \lambda_i^{n + 1} \big\|
  \quad \forall n \ge n_0.
\end{equation}
Recall that $\| y_i^n \| = 1$ for all $n \in \mathbb{N}$, $i \in \{ 1, 2 \}$. Therefore
$|\mu_i^{n + 1}| = \| \lambda_i^{n + 1} \|$ for all $n \ge n_0$ and the sequence $\{ \mu_i^n \}$ is bounded by
our assumption on the boundedness of multipliers $\lambda^n$. Consequently, there exists a subsequence of the sequence
$\{ \mu_i^{n_k} \}$, which we denote again by $\{ \mu_i^{n_k} \}$, converging to some $\mu_i^*$. With the use of
\eqref{eq:SequentialOptim_lambda} one obtains that $\lambda_i^* = \mu_i^* y_i^*$, which implies the required result (see
KKT optimality conditions \eqref{eq:OptmCond_Nonconv_x}, \eqref{eq:OptmCond_Nonconv_y} for problem
\eqref{prob:EllipsoidsDistNonconvex_ADMM}).
\end{proof}

\begin{remark}
The assumption on the boundedness of multipliers $\{ \lambda^n \}$ is standard in the theory of augmented Lagrangian
methods (cf.~\cite{BirginMartinez,LuoSunWu2008}). There are several well-known techniques guaranteeing the boundedness
of multipliers and convergence of augmented Lagrangian methods: safeguarding, conditional multiplier updating,
normalization of multipliers, etc. All these techniques were discussed in detail, e.g. in
\cite{LuoSunWu2008,LuoSunLi2007,WangLi2009}, and can be applied to Algorithm~\ref{alg:ADMM_nonconvex}. However, as is
pointed out in the monograph \cite{BirginMartinez}, in applications multipliers usually remain bounded without the use
of any special techniques ensuring their boundedness. Moreover, in all our numerical experiments on various types of
test problems multipliers $\{ \lambda^n \}$ always remained bounded. That is why we did not include any of 
the aforementioned techniques into the description of Algorithm~\ref{alg:ADMM_nonconvex}.
\end{remark}

\subsection{A heuristic restarting procedure and a global method}
\label{sect:HeuristicRestart}

The problem of finding the distance between the boundaries of ellipsoids is nonconvex. In particular, not all KKT points
of this problem are its globally optimal solutions. Consequently, a sequence generated by  
Algorithm~\ref{alg:ADMM_nonconvex} might converge to a locally optimal solution or even just a stationary point of
problem \eqref{prob:EllipsoidsDistNonconvex_ADMM}. To overcome this difficulty, we propose to use a simple heuristic
restarting procedure (a new starting point), which can be applied to any local method for solving problem
\eqref{prob:EllipsoidDist_NonConvex} (or problem \eqref{prob:EllipsoidsDistNonconvex_ADMM}).

Namely, if Algorithm~\ref{alg:ADMM_nonconvex} terminates by finding a point $(x^*, y^*, \lambda^*)$, then we propose to
restart the algorithm at a point $x^0 = (x^0_1, x^0_2)$ such that the points $x^0_i$ are, in a sense, diametrically
opposed to $x^*_i$ with respect to the centre of the corresponding ellipsoid. Namely, we define
$$
  x^0_i = (-x_i^* + z_i) + z_i = - x_i^* + 2 z_i, \quad i \in \{ 1, 2 \}.
$$
Bearing in mind the constraints of problem \eqref{prob:EllipsoidsDistNonconvex_ADMM} we put
$y_i^0 = S_i x_i^0 - c_i = - S_i x_i^* + c_i$, $i \in \{ 1, 2 \}$. The initial value of multipliers can be defined
arbitrarily.

Roughly speaking, we restart the algorithm at the opposite `sides' of the ellipsoids. In the case $z_1 = z_2 = 0$ one
simply sets $x^0 = - x^*$. To obtain the formula for $x^0$ in the general case one has to shift the origin to the
centre $z_i$ of the corresponding ellipsoid, take the vector opposite to $x_i^* - z_i$, and then shift the origin back
to its place.

Note that if the algorithm finds a point at which the ellipsoids intersect, there is no need to restart it at a new
initial point. Therefore, we arrive at the following restarting scheme for Algorithm~\ref{alg:ADMM_nonconvex} given in
Algorithm~\ref{alg:ADMM_nonconvex_global}.

\begin{algorithm}[t]	\label{alg:ADMM_nonconvex_global}
\caption{The ADMM for finding the distance between the boundaries of ellipsoids with a heuristic restart.}

\noindent\textbf{Step~1.} {Choose $\varepsilon_0 > 0$ and apply Algorithm~\ref{alg:ADMM_nonconvex} with an arbitrary
starting point. The algorithm terminates at a point $x^*$.}

\noindent\textbf{Step~2.} {If $\| x^*_1 - x^*_2 \| < \varepsilon_0$, then \textbf{return} $x^*$. Otherwise, choose
$\lambda_i^0 \in \mathbb{R}^d$, $i \in \{ 1, 2 \}$, and apply Algorithm~\ref{alg:ADMM_nonconvex} with 
$y_i^0 = - S_i x_i^* + c_i$, and the rest of the parameters defined as during the first run. The algorithm terminates at
a point $x^{**}$.}

\noindent\textbf{Step~3.} {If $\| x^*_1 - x^*_2 \| \le \| x^{**}_1 - x^{**}_2 \|$, \textbf{return} $x^*$. Otherwise,
\textbf{return} $x^{**}$.
}
\end{algorithm}

To verify the proposed heuristic restarting procedure we used a global optimisation method, which is a slight
modification of the global method for finding the so-called \textit{signed} distance between ellipsoids developed in
\cite{IwataNakatsukasaTakeda}. This method is based on the use of KKT optimality conditions for problem
\eqref{prob:EllipsoidDist_NonConvex} and a direct computation of all Lagrange multipliers for this problem via an
auxiliary generalised eigenvalue problem.

Recall that the problem of finding the distance between the boundaries of ellipsoids has the form:
\begin{align*}
  &\min\: \| x_1 - x_2 \|^2 \\ 
  & \text{subject to} \quad
  \langle x_1 - z_1, Q_1 (x_1 - z_1) \rangle = 1, \quad 
  \langle x_2 - z_2, Q_2 (x_2 - z_2) \rangle = 1.
\end{align*}
The KKT optimality conditions for this problem can be written as follows:
\begin{equation} \label{eq:KKT_Nontransformed}
  x_1 - x_2 = \mu Q_1(x_1 - z_1), \quad x_2 - x_1 = \gamma Q_2(x_2 - z_2).
\end{equation}
Here $\mu, \gamma \in \mathbb{R}$ are Lagrange multipliers. In \cite[Section~4.1]{IwataNakatsukasaTakeda} it was shown
that these Lagrange multipliers are solutions of the two following linear generalised eigenvalue problems:
\begin{align} \label{prob:GenEigenvalueProb1}
  \determ L_1(\mu) &:= \determ\Big[ \mu \big( F_{11} \otimes G_{10} - F_{10} \otimes G_{11} \big) 
  + \big( F_{01} \otimes G_{10} - F_{10} \otimes G_{01} \big) \Big] = 0, \\
  \determ L_2(\gamma) &:= \determ\Big[ \gamma \big( F_{11} \otimes G_{01} - F_{01} \otimes G_{11} \big) 
  + \big( F_{10} \otimes G_{01} - F_{01} \otimes G_{10} \big) \Big] = 0, \label{prob:GenEigenvalueProb2}
\end{align}
where $\otimes$ is the Kronecker product,
\begin{align*}
  F_{10} &= \begin{pmatrix} \mathbb{O}_d & - Q_2^{-1} \\ - Q_2^{-1} & \mathbb{O}_d \end{pmatrix}, \quad
  F_{01} = \begin{pmatrix} Q_1^{-1} & - Q_1^{-1} \\ - Q_1^{-1} & (z_1 - z_2)(z_1 - z_2)^T \end{pmatrix},
  \\
  G_{01} &= \begin{pmatrix} Q_2^{-1} & - Q_2^{-1} \\ - Q_2^{-1} & (z_1 - z_2)(z_1 - z_2)^T \end{pmatrix}, \quad
  G_{10} = \begin{pmatrix} \mathbb{O}_d & - Q_1^{-1} \\ - Q_1^{-1} & \mathbb{O}_d \end{pmatrix}, 
  \\
  F_{11} &= G_{11} = \begin{pmatrix} \mathbb{O}_d & I_d \\ I_d & \mathbb{O}_d \end{pmatrix},
\end{align*}
and, as above, $\mathbb{O}_d$ is the zero matrix of order $d$, while $I_d$ is the identity matrix of order $d$. 

Thus, one can globally solve the problem of finding the distance between the boundaries of ellipsoids by solving 
generalised eigenvalue problems \eqref{prob:GenEigenvalueProb1} and \eqref{prob:GenEigenvalueProb2}, computing
corresponding $x_1$ and $x_2$ for real values of $\mu$ and $\gamma$ from the KKT optimality conditions
\eqref{eq:KKT_Nontransformed}, and then comparing the distances $\| x_1 - x_2 \|$ for those $x_1$ and $x_2$ that satisfy
the constraints. The pair with the least distance is a globally optimal solution.

\begin{remark}
Let us note that by \cite[Theorem~4.4]{IwataNakatsukasaTakeda} for almost all positive definite matrices $Q_1$ and $Q_2$
generalised eigenvalue problems \eqref{prob:GenEigenvalueProb1} and \eqref{prob:GenEigenvalueProb2} have only a finite
number of solutions. However, these problems might be degenerate, if both $Q_1$ and $Q_2$ have eigenvectors orthogonal
to $z_1 - z_2$. In \cite[Section~5]{IwataNakatsukasaTakeda} regularity tests for problems
\eqref{prob:GenEigenvalueProb1} and \eqref{prob:GenEigenvalueProb2} and some techniques for handling degenerate cases
are discussed. However, we did not employ them in our numerical experiments, since we used the method from 
\cite{IwataNakatsukasaTakeda} only as a tool for verifying Algorithm~\ref{alg:ADMM_nonconvex_global}. In the case 
when the generalised eigenvalue problems were degenerate, we simply restarted computations with different matrices
$Q_1$ and $Q_2$. 
\end{remark}

\subsection{Numerical experiments}
\label{subsect:NumericalExperiments_NonconvexCase}

Let us present some results of preliminary numerical experiments demonstrating the higher efficiency of the ADMM in
comparison with other methods for finding the distance between the boundaries of ellipsoids in the case of nonconvex
high-dimensional problems. 

Our main goal was to test Algorithm~\ref{alg:ADMM_nonconvex_global} in the \textit{nonconvex} case. Therefore 
the problem data was generated in such a way that in most cases one ellipsoid lies within the other, which makes 
the problem multiextremal (i.e. there are locally optimal solutions of this problem that are not globally optimal). To
this end, we randomly generated a matrix $A$ of dimension $d \in \mathbb{N}$, whose elements were uniformly distributed
in the interval $[-100, 100]$. If the matrix $A$ has full rank, we define $Q_1 = A^T A$. Otherwise, the matrix $A$ was
randomly generated again till it had full rank, to ensure that the matrix $Q_1$ is positive definite. We chose as $Q_2$
a random diagonal matrix whose diagonal elements are uniformly distributed in the interval $[0.1, 0.6]$.
The centres $z_i$ of the ellipsoids were also randomly generated in such a way that their coordinates are uniformly
distributed in the interval $[-0.05, 0.05]$. Our numerical experiments demonstrated that this choice of matrices $Q_i$
and vectors $z_i$ ensured that in most cases ellipsoid $\mathcal{E}_1$ lies within $\mathcal{E}_2$. Moreover, the
numerical experiments also showed that when the dimension $d$ is small, the boundaries of the ellipsoids do not
intersect in almost all cases. However, as the dimension increases, the average distance between the boundaries of the
ellipsoids tends to zero and cases when the boundaries intersect appear more frequently. 

The parameters of Algorithms~\ref{alg:ADMM_nonconvex} and \ref{alg:ADMM_nonconvex_global} were chosen in the following
way. We set $\eta = 0.99$, $\beta = 2$, $\tau_0 = 10$, $\varepsilon = \varepsilon_0 = 10^{-6}$, and $\varkappa = 0.1$.
We also defined $\lambda^0 = 0$ and $y_1^0 = y_2^0 = (1, 0, 0, \ldots, 0)^T$ to make sure that in the case 
$z_1 = z_2 = 0$ the first iteration of the algorithm is not spent on correcting the initial data. Note that in the case
when $\lambda^0 = y^0 = 0$ and $z_1 = z_2 = 0$ one has $x^1 = 0$, $y^1$ is defined as an arbitrary vector from the unit
sphere, and $\lambda^1 = \tau_0 y^1$, which can be viewed as a random reinitialization of the algorithm. Let us point
out that in our numerical experiments any choice of $y^0_i$ from the unit sphere guaranteed that $v_i^n \ne 0$ for all
$n \in \mathbb{N}$ on Step~5 of Algorithm~\ref{alg:ADMM_nonconvex}.

We compared Algorithm~\ref{alg:ADMM_nonconvex_global} with the exact penalty method from \cite{TamasyanChumakov}, since
to the best of the author's knowledge this is the only available numerical method for finding the distance between the
boundaries of ellipsoids. The penalty parameter $\lambda$ for this method was defined as $\lambda = 100$. To make a
fair comparison we applied the same restarting procedure to the exact penalty method \cite{TamasyanChumakov}, as
described in Algorithm~\ref{alg:ADMM_nonconvex_global}. We used the inequality $\| G^*(z_k) \| < \varepsilon = 10^{-4}$
as a stopping criterion, since in some cases the algorithm failed to terminate before reaching the prespecified maximal
number of iterations when the value $\varepsilon = 10^{-5}$ was used. We chose an initial point for the method in the
same way as in the first version of the exact penalty method in the convex case (see
Section~\ref{subsect:NumericalExperiments_ConvexCase}). Namely, we defined $x_1^0 = z_1$ and 
$x_2^0 = z_2 + (0.1, \ldots, 0.1)^T$. Below we denote the exact penalty method as $EP$. 

Both algorithms were implemented in \textsc{Matlab}. We terminated the algorithms if the number of iterations exceeded
$10^6$. Similar to the convex case, we generated 10 problems for a given dimension 
$d \in \{ 5, 10, 20, 30, 50, 100, 200, 300,$ $500, 1000 \}$ and run each algorithm on these 10 problems. The total
run time of each method rounded to the nearest tenth is presented in Figure~\ref{fig:NonconvexCase} and
Table~\ref{tab:NonconvexCase}. In addition, we implemented a modification of the global method from
\cite{IwataNakatsukasaTakeda} described in Section~\ref{sect:HeuristicRestart} to verify whether the proposed heuristic
restarting procedure allows one to find a \textit{globally} optimal solution. Since the complexity of the global
method is very high (it is equal to $O(d^6)$; see \cite{IwataNakatsukasaTakeda}), we applied it only to problems of
dimensions $d \in \{ 5, 10, 20 \}$. For the sake of completeness, the total run time of the global method is given
in Table~\ref{tab:NonconvexCase}.

\begin{figure}[t]
\centering
\includegraphics[width=0.8\linewidth]{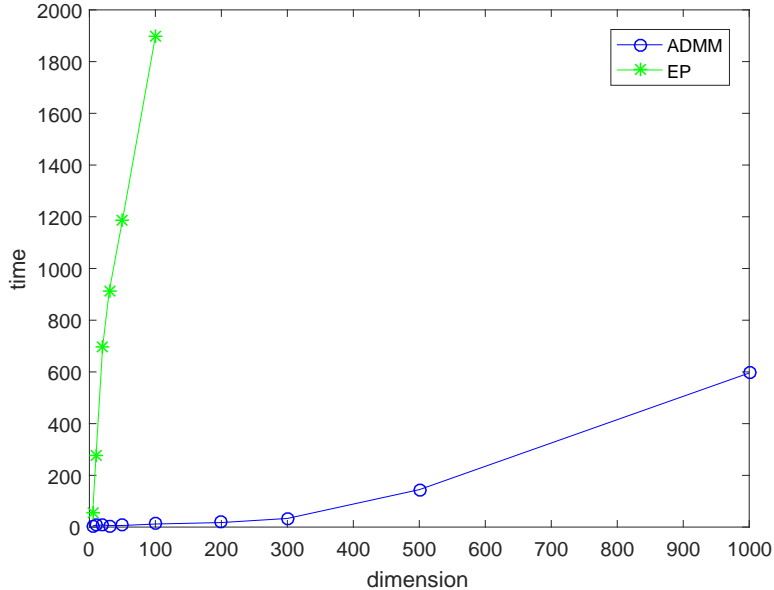}
\caption{The run time of the ADMM and the exact penalty method for 
$d \in \{ 5, 10, 20, 30, 50, 100, 200, 300, 500, 1000 \}$.}
\label{fig:NonconvexCase}
\end{figure}

\begin{table}
\begin{tabular}{c c c c c c c c c c c} 
  \hline
  d & 5 & 10  & 20 & 30 & 50 & 100 & 200 & 300 & 500 & 1000 \\
  \hline
  ADMM & 5.3 & 6.2 & 8.2 & 5.8 & 6.1 & 11.8 & 17.5 & 33.3 & 144.7 & 595.8 \\
  \hline
  EP & 55 & 274.1 & 695.7 & 911.2 & 1185.1 & 1896.8 & --- & --- & --- & --- \\
  \hline
  Global & 0.7 & 12.8 & 709.7 & --- & --- & --- & --- & --- & --- & --- \\ 
  \hline
\end{tabular}
\caption{The run time of each method in seconds. `---' indicates that either the time exceeded one hour or the algorithm
failed to find a solution due to reaching the prespecified maximal number of iterations.}
\label{tab:NonconvexCase}
\end{table}

Firstly, let us note that for \textit{all} problems of dimension $d \in \{ 5, 10, 20 \}$ (and numerous other test
problems not reported here) both Algorithm~\ref{alg:ADMM_nonconvex_global}, based on the ADMM, and the exact penalty
method from \cite{TamasyanChumakov} with the restarting procedure described in the previous section found
\textit{exactly the same} solutions as the global method. Thus, one can conclude that the proposed heuristic choice of a
new starting point for the second run of an algorithm indeed allows one to find a \textit{globally} optimal solution
for various problem instances. It seems that the reason behind this phenomenon lies in the fact that for any dimension
$d \in \mathbb{N}$ there are always only \textit{two} points of local minimum in the problem of finding the distance
between the boundaries of ellipsoids, while the rest of the KKT points are local/global maxima. Furthermore, the two
points of local minimum roughly lie on the `opposite sides' of the ellipsoids. Therefore, any reasonable minimisation
method would converge to one of the local minima, while restarting the method at the `opposite sides' of ellipsoids
allows one to compute the second point of local minimum and, as a result, find a global minimum between the two computed
points. A rigorous theoretical justification of this observation is a challenging open problem that lies beyond the
scope of this article. 

Let us also note that for problems of dimension $d \in \{ 30, 50, 100 \}$ Algorithm~\ref{alg:ADMM_nonconvex_global} and
the exact penalty method always found the same solution. In addition, we tested 
Algorithm~\ref{alg:ADMM_nonconvex_global} and the exact penalty method without the restarting procedure. The
results of our experiments showed that without the restart these methods often converge to a locally optimal solution,
which is not globally optimal. In particular, in the case $d = 5$ Algorithm~\ref{alg:ADMM_nonconvex_global} converged to
a locally optimal solution, which is not globally optimal, for $16$ out of $100$ test problems, while the exact
penalty method converged to such solutions for $42$ out of $100$ test problems. Thus, the proposed heuristic restarting
procedure is not redundant, since without it both the ADMM and the exact penalty method often cannot find a globally
optimal solution.

Secondly, the results of numerical experiments demonstrate that Algorithm~\ref{alg:ADMM_nonconvex_global} significantly
outperforms the exact penalty method from \cite{TamasyanChumakov} for all dimensions $d$. On the other hand, for small
dimensional problems ($d \le 7$) the global method was consistently faster than
Algorithm~\ref{alg:ADMM_nonconvex_global}. Thus, for small dimensional problems ($d \le 7$) it is reasonable to apply
the global method, while for problems of higher dimension Algorithm~\ref{alg:ADMM_nonconvex_global} is the method of
choice.

Finally, let us comment on the unexpected decrease of the run time of Algorithm~\ref{alg:ADMM_nonconvex_global}
for dimensions $d$ in the range between $20$ and $100$. It seems that this phenomenon is at least partially connected
with some peculiarities of our implementation of Algorithm~\ref{alg:ADMM_nonconvex_global} in \textsc{Matlab}. In
particular, we used standard \textsc{Matlab} routines to compute the Cholesky decomposition, to solve corresponding
systems of linear equations, etc. These routines might work more efficiently for problems of dimension more than $20$
due to the effect of parallelisation.

Moreover, it should be noted that for problems with $d \le 20$ reported here the boundaries of the ellipsoids did not
intersect. As our numerical experiments on various test problems showed, both Algorithm~\ref{alg:ADMM_nonconvex_global}
and the exact penalty method are extremely efficient, when it comes to finding an intersection point of the boundaries
of two ellipsoids. Even in the case $d = 1000$ it usually takes Algorithm~\ref{alg:ADMM_nonconvex_global} less than $5$
seconds to find an intersection point, provided the boundaries of the ellipsoids intersect. Therefore, the run time of
the algorithm significantly decreases with the increase of the number of problem instances when the boundaries
of the ellipsoids intersect. As was mentioned above, for our choice of the problem data the average distance between the
boundaries of the ellipsoids tends to zero as $d$ increases and the cases when the boundaries of the ellipsoids
intersect appear more frequently. This peculiarity of the problem data might be another factor that contributed to the
decrease in the run time. 

We tried generating matrices $Q_i$ in multiple different ways to ensure that the boundaries of the ellipsoids either do
not intersect for all $d$ or intersect with approximately the same frequency for all dimensions. However, generating
such data turned out to be a very complicated problem, whose detailed discussion lies outside the scope of this paper.

\section{Conclusions}

We developed several versions of the alternating direction method of multipliers for computing the distance between
either two ellipsoids or their boundaries. In the first case we presented the ADMM with both fixed penalty parameter and
automatic adjustments of the penalty parameter. In the second case we presented the ADMM with a heuristic rule for
updating the penalty parameter and a heuristic restarting procedure for finding a global minimum in the nonconvex case.
We numerically verified this procedure with the use of a slight modification of a global method for finding the
so-called signed distance between ellipsoids. The results of our numerical experiments showed that the proposed
heuristic restarting procedure always allows one to find a globally optimal solution. Furthermore, the results of
numerical experiments both in the convex and the nonconvex cases clearly demonstrate that the versions of the ADMM
developed in this paper significantly outperform all existing methods for finding the distance between two ellipsoids
and all methods for finding the distance between the boundaries of ellipsoids, except for the case of small-dimensional
problem.


\bibliographystyle{abbrv}  
\bibliography{Dolgopolik_bibl}

\end{document}